\newcommand{\junk}[1]{}
\newenvironment{proof}{{\bf Proof:  }}{\hfill\rule{2mm}{2mm}}
\newenvironment{proofof}[1]{{\bf Proof of #1:  }}{\hfill\rule{2mm}{2mm}}
\newtheorem{theorem}{Theorem}
\newtheorem{lemma}[theorem]{Lemma}
\newtheorem{proposition}[theorem]{Proposition}
\newtheorem{corollary}[theorem]{Corollary}
\newtheorem{problem}[theorem]{Problem}
\newtheorem{fact}[theorem]{Fact}
\newcommand{\Z}{\ensuremath{\mathbb Z}}
\title{Polychromatic Colorings on the Hypercube}
\author{
John Goldwasser\thanks{West Virginia University, \texttt{jgoldwas@math.wvu.edu}},
Bernard Lidicky\thanks{Iowa State University, \texttt{lidicky@iastate.edu}. Research of this author is supported in part by NSF grants DMS-1266016 and DMS-1600390.},
Ryan R. Martin\thanks{Iowa State University, \texttt{rymartin@iastate.edu}. 
Research of this author is supported in part by National Security Agency grant H98230-13-1-0226 and by Simons Foundation grant \#353292.},\\ 
David Offner\thanks{Westminster College, \texttt{offnerde@westminster.edu}}, 
John Talbot\thanks{University College London, \texttt{j.talbot@ucl.ac.uk}},
 and Michael Young\thanks{Iowa State University, \texttt{myoung@iastate.edu}}
}
\begin{document}

\maketitle

\begin{abstract}
Given a subgraph $G$ of the hypercube $Q_n$, a coloring of the edges of $Q_n$ such that every embedding of $G$ contains an edge of every color is called a $G$-polychromatic coloring. The maximum number of colors with which it is possible to $G$-polychromatically color the edges of \emph{any} hypercube is called the polychromatic number of $G$. To determine polychromatic numbers, it is only necessary to consider a specific type of coloring, which we call simple.  The main tool for finding upper bounds on polychromatic numbers is to translate the question of polychromatically coloring the hypercube so every embedding of a graph $G$ contains every color into a question of coloring the 2-dimensional grid  so that every so-called \textit{shape sequence} corresponding to $G$  contains every color. After surveying the tools for finding polychromatic numbers, we apply these techniques to find polychromatic numbers of a class of graphs called punctured hypercubes.  We also consider the problem of finding polychromatic numbers in the setting where larger subcubes of the hypercube are colored.  We exhibit two new constructions which show that this problem is not a straightforward generalization of the edge coloring problem.
\end{abstract}

\noindent{\bf Keywords.} {polychromatic coloring, hypercube, coloring, Tur\'an.}

\section{Introduction}
For $n \in \Z$, $n \ge 1$,  the \emph{$n$-dimensional hypercube}, denoted
by $Q_n$, is the graph with $V(Q_n) = \{0,1\}^n$, and edges
between vertices which differ in exactly one coordinate. For any graphs $G$, $H$, a subgraph of $H$ isomorphic to $G$ is called an \emph{embedding} of $G$ in $H$.  Given a set $R$ of $r$ colors, an edge coloring of a graph $G$ with $r$ colors is a surjective function $\chi:E(G) \rightarrow R$ assigning a color to each edge of $G$. All colorings of graphs
will refer to edge colorings, unless otherwise noted. Given a graph $G$, an edge coloring of a hypercube with $r$ colors such that every
embedding of $G$ contains an edge of every color is called a
\emph{$G$-polychromatic} $r$-coloring, and we denote by $p(G)$ the maximum
number of colors with which it is possible to $G$-polychromatically
color the edges of \emph{any} hypercube. Call $p(G)$ the \textit{polychromatic number} of $G$.

Motivated by Tur\'an type problems on the hypercube, Alon, Krech, and Szab\'o \cite{AKS07} introduced the notion of polychromatic coloring on the hypercube and proved bounds for the polychromatic number of $Q_d$.
\begin{theorem}[Alon, Krech, and Szab\'o~\cite{AKS07}]\label{AKS}
For all $d \ge 1$,
\[ \binom{d+1}{2} \ge p(Q_d) \ge \begin{cases} 
  \frac{(d+1)^2}{4} \text{ if $d$ is odd } \\ 
  \frac{d(d+2)}{4} \text{ if $d$ is even.}
  \end{cases} \]
\end{theorem}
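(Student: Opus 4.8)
The plan is to prove the two inequalities separately: the lower bound from a single explicit coloring, and the upper bound by passing to the two‑dimensional grid formulation of simple colorings described in the introduction. For the lower bound I would exhibit one coloring that is $Q_d$‑polychromatic on every $Q_n$. Given an edge $e$ in direction $i$ with lower endpoint $u$ (so $u_i=0$), set $L(e)=|\{j<i: u_j=1\}|$ and $R(e)=|\{j>i: u_j=1\}|$, and color $e$ by the pair $\bigl(L(e)\bmod a,\ R(e)\bmod b\bigr)$, where $a=\lceil (d+1)/2\rceil$ and $b=\lfloor (d+1)/2\rfloor$, so that $a+b=d+1$ and $ab=\lfloor (d+1)^2/4\rfloor$.

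To verify this is polychromatic, consider any copy of $Q_d$, given by a set $S=\{s_1<\cdots<s_d\}$ of free coordinates with the remaining coordinates fixed. For an edge in the $k$‑th free direction $s_k$, the quantities $L(e)$ and $R(e)$ equal fixed constants (the number of $1$'s among the fixed coordinates lying left, respectively right, of $s_k$) plus the number of $1$'s placed on the $k-1$ free coordinates below $s_k$ and the $d-k$ free coordinates above $s_k$; these latter counts range independently over $[0,k-1]$ and $[0,d-k]$. Choosing $k=a$ makes the first range have exactly $a$ consecutive values and the second exactly $b$, so modulo $a$ and $b$ they realize every residue, and the additive constants wash out after reduction. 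Hence all $ab$ colors appear already among the direction‑$s_a$ edges of every copy, giving $p(Q_d)\ge\lfloor (d+1)^2/4\rfloor$; optimizing $ab$ subject to $a+b\le d+1$ is exactly what forces the split into $\lceil\cdot\rceil$ and $\lfloor\cdot\rfloor$.

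For the upper bound I would map each edge $e$ to the lattice point $(L(e),R(e))$, which is the two‑dimensional grid of the introduction. In the standard cube (free coordinates $1,\dots,d$, all fixed coordinates $0$) one has $L+R\le d-1$, and the direction‑$k$ edges fill the rectangle $[0,k-1]\times[0,d-k]$; these rectangles union to the triangle $\{(L,R): L,R\ge 0,\ L+R\le d-1\}$, which contains exactly $\binom{d+1}{2}$ points. Thus for any \emph{simple} coloring — one in which the color of $e$ depends only on $(L(e),R(e))$ — every color must (by polychromaticity) already occur in the standard cube, whose edges realize only those $\binom{d+1}{2}$ points, so at most $\binom{d+1}{2}$ colors are used.

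The remaining and principal step is the reduction asserted in the introduction: that $p(Q_d)$ is always attained by a simple coloring, so that bounding simple colorings bounds the polychromatic number. I expect this symmetrization to be the main obstacle, since an arbitrary optimal coloring need not respect the $(L,R)$‑structure and one must argue it can be replaced by a simple coloring without losing colors (intuitively by compressing along the symmetries of the cube that fix $L$ and $R$). Granting that reduction, the planar count above yields $\binom{d+1}{2}\ge p(Q_d)$ and completes the proof; note that the two bounds differ by $\binom{d+1}{2}-\lfloor (d+1)^2/4\rfloor$, precisely the loss incurred by the modular reduction in the construction.
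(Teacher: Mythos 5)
Your lower bound is complete and correct, and it is essentially the paper's construction in a different guise: the paper uses the single linear form $\chi(e)=\lceil\tfrac{d+1}{2}\rceil\cdot l(e)+r(e) \pmod q$ and checks that the $\lceil d/2\rceil$th shape (a $\lceil\tfrac{d+1}{2}\rceil\times\lceil\tfrac{d}{2}\rceil$ parallelogram in its grid of color classes) is rainbow, while you use the product coloring $\bigl(L\bmod a,\ R\bmod b\bigr)$ and check that the direction-$s_a$ edges of any copy fill an $a\times b$ box of consecutive $(L,R)$ values; this is the same verification at the same shape, and both are valid. Your upper-bound count is also exactly the paper's, namely the argument of Proposition~\ref{q3-v} applied to $Q_d$: in a simple coloring the all-zeros embedding of $Q_d$ meets only the $\binom{d+1}{2}$ color classes with $l+r\le d-1$.

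The genuine gap is the step you yourself flag: the reduction to simple colorings. In the paper this is Lemma~\ref{simple}, a substantive result with its own proof, and your sketched route --- ``compressing along the symmetries of the cube that fix $L$ and $R$'' --- is not how it goes and would likely fail: there is no evident recoloring of a fixed $Q_n$ that equalizes colors within an $(l,r)$-class while preserving the property that every embedding of $Q_d$ sees all $r$ colors. The actual proof is a Ramsey argument and it does not symmetrize a given coloring; it changes cubes. One colors each $k$-subset $S$ of $[n]$ by the $r^{k2^{k-1}}$-valued vector of edge colors of $cube(S)$, applies Ramsey's theorem for $k$-uniform hypergraphs to obtain a monochromatic set $T$ of $k^2+k-1$ coordinates, and then takes $S$ to consist of the well-spaced $(ik)$th elements of $T$: the spacing lets any two edges $e_1,e_2$ of $cube(S)$ with $l(e_1)=l(e_2)$ and $r(e_1)=r(e_2)$ be linked through an edge $e_3$ of an auxiliary $cube(S')$ with $S'\subseteq T$, forcing $\chi(e_1)=\chi(e_3)=\chi(e_2)$. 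Note that this is precisely where the definition of $p(G)$ as a bound over \emph{all} hypercubes is used ($n$ must be enormous compared to $d$), so the reduction cannot be performed on a fixed small cube, contrary to what a compression heuristic suggests. With Lemma~\ref{simple} granted, your proof is correct and matches the paper's route.
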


The exact value of the polychromatic number of $Q_d$ was determined in \cite{Off08}.
\begin{theorem}[Offner~\cite{Off08}]\label{pd}
For all $d \ge 1$,
  \[ p(Q_d) = \begin{cases}
  \frac{(d+1)^2}{4} \text{ if $d$ is odd } \\ 
  \frac{d(d+2)}{4} \text{ if $d$ is even.}
  \end{cases} \]
\end{theorem}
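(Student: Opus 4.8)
The plan is to establish the equality by producing an upper bound that meets the lower bound of Theorem~\ref{AKS}. Since Alon, Krech, and Szab\'o already exhibit a coloring attaining the claimed value, the entire task is to prove $p(Q_d) \le \lfloor (d+1)^2/4 \rfloor$ for every $d$ (equivalently, the two-case formula). The first step is to reduce to \emph{simple} colorings: I would argue that any $Q_d$-polychromatic $r$-coloring of a sufficiently large $Q_n$ can be symmetrized, via averaging over coordinate permutations together with a compactness-type argument on arbitrarily large host cubes, into one in which the color of an edge in direction $i$ at a vertex $v$ depends only on the pair $(a,b)$, where $a$ counts the $1$'s of $v$ in coordinates below $i$ and $b$ counts those above, and crucially this is done without losing colors. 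A coloring is then encoded by a single function $f:\N^2 \to R$.

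Next I would rephrase polychromaticity as a covering condition on $f$. An embedding of $Q_d$ selects active directions $i_1 < \cdots < i_d$ and fixes the remaining coordinates; following the $(a,b)$-profile shows that the edges in the $\ell$-th active direction realize precisely the values of $f$ on an axis-parallel rectangle $R_\ell$ of dimensions $\ell \times (d+1-\ell)$, whose lower corner is determined by how many fixed $1$'s lie below and above and can therefore be translated (and the rectangles sheared relative to one another) by the choice of embedding. Hence the coloring is $Q_d$-polychromatic if and only if, for every shape sequence $(R_1,\dots,R_d)$ arising this way, $f$ is surjective onto $R$ on the union $\bigcup_{\ell=1}^d R_\ell$. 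Every such shape contains the balanced rectangle $R_{\lceil (d+1)/2\rceil}$, of area exactly $\lfloor (d+1)^2/4\rfloor$; coloring $f(a,b)$ by the residues $(a \bmod \lceil (d+1)/2\rceil,\, b \bmod \lfloor (d+1)/2\rfloor)$ makes that one rectangle rainbow regardless of its position, which re-derives the matching construction.

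The heart of the argument, and the step I expect to be the main obstacle, is the matching upper bound for $f$: if $f$ is surjective on every shape then $|R| \le \lfloor (d+1)^2/4\rfloor$. A single shape is far larger than this bound, so pigeonhole on one shape cannot suffice, and the whole family must be exploited simultaneously. Dually, each color class of $f$ is a set meeting every shape, so $|R|$ is at most the maximum number of pairwise-disjoint hitting sets for the family of all shapes, and I would bound this packing number. The technical work is to isolate the right finite subfamily---essentially a staircase of translated and sheared copies whose pairwise overlaps are controlled---so that a counting or LP-duality estimate on that subfamily collapses the packing number to the area of the balanced rectangle. Carrying out the overlap bookkeeping separately according to the parity of $d$ is what produces the floor, and hence the split into the odd and even formulas.
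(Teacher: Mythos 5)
Your architecture matches the paper's (reduce to simple colorings, reformulate as covering a grid by shape sequences, construct a rainbow balanced shape for the lower bound), and your lower bound is sound --- the product coloring $(a \bmod \lceil (d+1)/2\rceil,\ b \bmod \lfloor (d+1)/2\rfloor)$ is a legitimate variant of the paper's $\chi(e) = \lceil \frac{d+1}{2}\rceil\, l(e) + r(e) \pmod q$, and both make the middle shape contain all colors in every position. But there are two genuine gaps. First, the reduction to simple colorings cannot be done by ``averaging over coordinate permutations'': colors are labels, not numbers, so there is nothing to average, and permuting coordinates of a polychromatic coloring merely yields another polychromatic coloring with no control on how colors depend on position. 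The paper's Lemma~\ref{simple} instead applies Ramsey's theorem for $k$-uniform hypergraphs with $r^{k2^{k-1}}$ colors (coloring each $k$-set $S$ by the vector of colors on $cube(S)$), extracts a set $T$ of $k^2+k-1$ coordinates on which all $k$-subcubes are colored identically, and then takes $k$ widely spaced coordinates of $T$ so that edges with equal $(l,r)$ can be matched up across different subcubes. A ``compactness-type argument'' does not substitute for this combinatorial step.

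Second, and more seriously, the upper bound --- the entire content of the theorem beyond Theorem~\ref{AKS} --- is deferred in your proposal to an unspecified ``counting or LP-duality estimate'' on a ``staircase'' subfamily, with the admission that this is the main obstacle. That is precisely where the proof lives, and your sketch supplies no mechanism for it. The paper's Lemma~\ref{pig} provides one: assuming every instance of the sequence $S_1,\ldots,S_d$ contains every color, a greedy procedure partitions the columns, for each color $l$, into at most $d$ intervals so that every translate of $S_j$ located in the $j$th interval contains $l$; since this creates at most $d\cdot p(\mathcal{S})+1$ breakpoints in total, taking the grid enormous yields a window of width $m \gg w(\mathcal{S})$ in which, for each color $l$, \emph{every horizontal translate of a single shape} $S^l$ contains $l$. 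Then a row-by-row count (an occurrence of $l$ in row $i$ lies in at most $X^i$ translates of $S^l$, and there are at least $m - w(\mathcal{S})$ translates) gives $m\sum_i X^i \ge (m-w(\mathcal{S}))\,p(\mathcal{S})$, hence $p(Q_d) \le \sum_{i=1}^{d}\min\{i,\,d+1-i\} = \lfloor (d+1)^2/4\rfloor$. Note that the bound emerges as a sum of row-wise maxima over the shapes, not directly as the area of the balanced rectangle (the two coincide for $Q_d$ but not for general sequences); even your packing/dual reformulation would still require exactly this localization of each color to all translates of one shape, which is the missing idea in your proposal.
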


Prior to the work of Alon, Krech, and Szab\'o \cite{AKS07}, coloring arguments had also been used to give bounds on Tur\'an type problems on the hypercube, for example by Conder~\cite{Con93} and Axenovich and Martin~\cite{AM06}. In \cite{Off09}, a condition was given which, if satisfied by a graph $G$, implies $p(G) \ge 3$.


In this paper we begin by surveying what is known about polychromatic colorings on the hypercube. In Section~\ref{simpleSection} we establish that when studying polychromatic colorings, we need only consider a specific type of coloring called a \textit{simple} coloring (such colorings were called \emph{Ramsey} in \cite{Off08}). In Section~\ref{boupg}, we use the idea of simple coloring to transform the problem of edge coloring the hypercube so that a given subgraph is polychromatic to one of coloring a rectangular grid so that a collection of subsets is polychromatic.  In this context, Lemma~\ref{pig} provides the key insight to prove upper bounds on polychromatic numbers.  All known lower bounds for polychromatic numbers come from explicit constructions, and at the end of the section we give an example by proving the lower bounds in Theorems~\ref{AKS} and \ref{pd}.

Following this survey, we show in Section~\ref{presults} how to use these methods to determine the value of $p(G)$ for some graphs $G$ where $p(G)$ was not previously known, for example hypercubes with one edge or vertex deleted. We call these graphs punctured hypercubes.  Theorem~\ref{qmv} gives the polychromatic number for any odd-dimensional punctured hypercube, and Theorems~\ref{p4mv} and \ref{p4me} give the polychromatic number for punctured $Q_4$'s.  For even-dimensional punctured hypercubes with dimension greater than 4, Theorem~\ref{pq2kmv} provides a lower bound on the polychromatic number, but we can not determine it exactly. The current best bounds are summarized in Corollary~\ref{evensum}. The section concludes with suggestions for future research.


Section~\ref{generalSection} concerns a generalization of the problem proposed by Alon, Krech, and Szab\'o \cite{AKS07} where instead of edges, subcubes of a fixed dimension are colored. Previously, \"Ozkahya and Stanton \cite{OS11} had generalized the bounds given in Theorem~\ref{AKS} to this setting.  If this more general problem were a straightforward generalization of the edge-coloring problem, the polychromatic number would be equal to the lower bound.  However Theorems~\ref{p233} and \ref{p24} provide two constructions that show this is not the case, and thus new ideas will be required to determine polychromatic numbers in this setting.

\junk{
Finally, in Section~\ref{embedSection}, we mention an alternate definition of embedding that we call \textit{rigid embeddings}, used for example in \cite{GT12}, that may provide an interesting context for polychromatic coloring problems.
}

\subsection{Notation for Hypercubes}
We refer to the $n$ coordinates of a vertex as \emph{bits}, and given an edge
$\{x,y\}$, we refer to the unique bit where $x_i \neq y_i$ as the \emph{flip
bit}. We represent an edge of $Q_n$ by an $n$-bit vector with a star in the
flip bit.  For example, in $Q_4$, we represent the edge between vertices
$[0100]$ and $[0101]$ by $[010{*}]$. Similarly, we represent an embedding of $Q_d$ in $Q_n$ by
an $n$-bit vector with stars in $d$ coordinates.  For instance $[1{*}00{*}]$ is
the embedding of $Q_2$ in $Q_5$ with vertices $\{[10000], [11000], [10001], [11001]\}$ and
edges $\{[1{*}000], [1000{*}], [1{*}001], [1100{*}]\}$. We call edges with the same
flip bit \emph{parallel}, and the class of edges with flip bit $i$ the
\emph{$i^{th}$ parallel class}.  For an edge $e \in E(Q_n)$ with
flip bit $j$ define the prefix sum $l(e)= \sum_{i=1}^{j-1} x_i$ and postfix sum $r(e)=\sum_{i=j+1}^n x_i$.  


\section{Simple Colorings}\label{simpleSection}
Recall that for an edge $e \in E(Q_n)$, $l(e)$ is the number of 1's to the left
of the star in $e$, and $r(e)$ is the number of 1's to the right. Call a
coloring $\chi$ of the hypercube \emph{simple} if $\chi(e)$ is determined by
$l(e)$ and $r(e)$ (such colorings were called \emph{Ramsey} in
\cite{Off08}).  The following lemma tells us that when
studying polychromatic colorings on the hypercube, we need only consider simple ones. The proof is essentially from \cite{Off08}, building on ideas from \cite{AKS07}.

\begin{lemma}\label{simple}
Let $k \ge 1$ and $G$ be a subgraph of $Q_k$.  If $p(G) = r$, then there is a simple $G$-polychromatic $r$-coloring on $Q_k$.
\end{lemma}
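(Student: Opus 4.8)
The plan is to start from a polychromatic colouring living on a very high-dimensional cube and to extract from it, by a Ramsey argument, a copy of $Q_k$ on which the colouring already depends only on the prefix and postfix sums. Since $p(G)=r$, there is a $G$-polychromatic $r$-colouring of some hypercube; because the largest number of colours in a $G$-polychromatic colouring does not decrease as the dimension grows, we may fix such a colouring $\chi$ on $Q_N$ with $N=N(k,r)$ as large as the argument below requires. The target is to locate coordinates $p_1<\cdots<p_k$ in $\{1,\dots,N\}$ so that the subcube they span, with every other coordinate frozen to $0$, inherits a \emph{simple} colouring from $\chi$.

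The key observation driving the Ramsey step is a bijection. Freeze all coordinates outside a set $P=\{p_1<\cdots<p_k\}$ to $0$. Then an edge of the resulting subcube is completely determined by its flip bit together with the set of coordinates carrying a $1$, everything else being $0$; and if such an edge has $l(e)=a$ and $r(e)=b$ (so $a+b\le k-1$), its flip bit and its $a+b$ ones form exactly an $(a+b+1)$-subset of $P$, in which the flip bit is the $(a+1)$-st smallest element. Conversely, any $(a+b+1)$-subset recovers a unique such edge. Thus, for each pair $(a,b)$ with $a+b\le k-1$, edges with $(l,r)=(a,b)$ correspond precisely to $(a+b+1)$-subsets of $P$. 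First I would, for a fixed $(a,b)$, colour every $(a+b+1)$-subset of the current coordinate set by the $\chi$-colour of its associated edge and invoke Ramsey's theorem to pass to a large subset on which this colouring is constant; iterating over the finitely many pairs $(a,b)$ and nesting the subsets yields, for $N$ large enough, a set $H$ with $|H|\ge k$ that is simultaneously homogeneous for every type, so that any edge supported on $H$ (complement frozen to $0$) has a colour depending only on $(l,r)$.

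Taking $p_1<\cdots<p_k$ to be any $k$ elements of $H$, the induced colouring of the spanned $Q_k$ is simple by construction, because freezing the outside coordinates to $0$ makes the prefix and postfix sums computed inside the subcube agree with those in $Q_N$. It then remains to check that this simple colouring is $G$-polychromatic and uses all $r$ colours, and both are inherited for free: every embedding of $G$ into the subcube is an embedding of $G$ into $Q_N$, hence meets all $r$ colours under $\chi$, and in particular the canonical copy of $G$ (available since $G\subseteq Q_k$) witnesses that all $r$ colours occur. Transporting along $p_i\leftrightarrow i$ produces the desired simple $G$-polychromatic $r$-colouring of $Q_k$.

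The hard part will be forcing the colour to depend on the pair $(l,r)$ \emph{alone}, and in particular to agree across edges that share an $(l,r)$ value but have different flip-bit positions; the $(a+b+1)$-subset encoding is precisely what collapses each such family into a single Ramsey instance, so that one application per type suffices. A secondary point that must be nailed down is the reduction to a high-dimensional cube, namely that the maximum number of colours in a $G$-polychromatic colouring of $Q_n$ is nondecreasing in $n$, so that the colouring fed into the Ramsey machine may be taken on a cube of whatever dimension the finite iteration of Ramsey's theorem demands.
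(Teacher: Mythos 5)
Your proof is correct, and it reaches simplicity by a genuinely different Ramsey decomposition than the paper's. The paper makes a \emph{single} application of Ramsey's theorem for $k$-uniform hypergraphs with $r^{k2^{k-1}}$ colors: each $k$-set $S$ is colored by the full vector of $\chi$-values on the edges of $cube(S)$, a homogeneous set $T$ of $k^2+k-1$ coordinates is extracted, and then---crucially---the $k$ coordinates are chosen spread out inside $T$ (the $(ik)$th elements of $T$), so that for two edges $e_1,e_2$ of $cube(S)$ with the same $(l,r)$ one can find an auxiliary edge $e_3$ of some $cube(S')$, $S'\subseteq T$, which coincides with $e_1$ as an edge of $Q_n$ while occupying $e_2$'s position in the fixed edge-ordering of $cube(S')$; homogeneity then forces $\chi(e_1)=\chi(e_3)=\chi(e_2)$. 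You instead exploit the bijection between edges in color class $(a,b)$ supported on $P$ (complement frozen to $0$) and $(a+b+1)$-subsets of $P$ (the flip bit being the $(a+1)$st smallest element), and run one $r$-color Ramsey application per class, nesting the $\binom{k+1}{2}$ homogeneous sets; this collapses all edges sharing an $(l,r)$-type into a single Ramsey instance, so you never need the spread-out selection or the exchange edge $e_3$. The trade-off: the paper pays with a huge color palette and the $e_1\to e_3\to e_2$ argument but needs only one Ramsey call, and its ``vector-coloring of $k$-sets'' formulation is the one that is adapted verbatim to the subcube-coloring setting of Lemma~\ref{simplepi}; your per-class encoding also generalizes there (a $Q_i$ in class $(x_1,\ldots,x_{i+1})$ with zero complement corresponds to a subset of size $i+\sum_j x_j$ with stars at prescribed ranked positions), at the cost of iterating Ramsey over more classes. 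A point in your favor: you verify explicitly that all $r$ colors survive the restriction (via the canonical copy of $G\subseteq Q_k$), which the paper leaves implicit.

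One small correction: your justification for passing to a huge cube is stated backwards. The maximum number of colors admitting a $G$-polychromatic coloring of $Q_n$ is \emph{non-increasing} in $n$ (restricting a coloring of $Q_{n+1}$ to a subcube $Q_n$ keeps every embedding of $G$ polychromatic, and all colors still appear inside any single embedding), not nondecreasing. Nothing is lost, however: the definition of $p(G)=r$ already asserts that \emph{every} hypercube, in particular $Q_N$ for your $N=N(k,r)$, admits a $G$-polychromatic $r$-coloring, so no monotonicity argument is needed at all.
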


\begin{proof}
Fix $k$. We show that if $n$ is sufficiently large and $Q_n$
has a $G$-polychromatic $r$-coloring, then it contains a subgraph $Q_k$ with a
simple coloring.

Suppose that $n$ is large and $\chi$ is a $G$-polychromatic $r$-coloring of $Q_n$.  We will use  Ramsey's theorem for $k$-uniform hypergraphs with
$r^{k2^{k-1}}$ colors.  We define a $r^{k2^{k-1}}$-coloring of the $k$-subsets
of $[n]$.  Fix an arbitrary ordering of the edges of $Q_k$.  For an arbitrary
subset $S$ of the indices, define $cube(S)$ to be the subcube whose ${*}$
coordinates are at the positions of $S$ and all other coordinates are 0.  Let
$S$ be a $k$-subset of $[n]$, and define the color of $S$ to be the vector
whose coordinates are the $\chi$-values of the edges of the $k$-dimensional
subcube $cube(S)$ (according to our fixed ordering of the edges of $Q_k$).  By
Ramsey's theorem, if $n$ is large enough, there is a set $T \subseteq [n]$ of
$k^2+k-1$ coordinates such that the color-vector is the same for any
$k$-subset of $T$.  Fix a set $S$ of $k$ particular coordinates from $T$:
those which are the $(ik)$th elements of $T$ for $ i \in [k]$.  

We show the coloring of $cube(S)$ is simple. Let $e_1$ and $e_2$ be two edges
of $cube(S)$ such that $l(e_1) = l(e_2)$ and $r(e_1) = r(e_2)$.  Since there
are at least $k-1$ elements in $T$ in between each coordinate of $S$, as well as $k-1$ elements to the left of the first coordinate of $S$ and to the right of the last coordinate of $S$, there is a set of $k$ coordinates $S' \subseteq T$ and an edge
$e_3$ of $cube(S')$ such that
\begin{enumerate}
\item[(i)] $e_3$ is the same edge when restricted to $S$ as $e_1$ and 
\item[(ii)] $e_3$ occupies the same position in the ordering of edges in
  $cube(S')$ as $e_2$ occupies in $cube(S)$.
\end{enumerate}
Thus $\chi(e_1) = \chi(e_3) = \chi(e_2)$, so the coloring of $cube(S)$ is a
simple $G$-polychromatic $r$-coloring.  

For example, suppose $k=6$ and ignoring all coordinates not in $T$,
suppose \[\begin{split}
e_1 &= xxxxx0xxxxx1xxxxx1xxxxx{*}xxxxx0xxxxx1xxxxx\\
e_2 &= xxxxx1xxxxx1xxxxx{*}xxxxx0xxxxx1xxxxx0xxxxx\\
\end{split} \] where the coordinates in $T$ but not $S$ are represented by
$x$.  Then a possibility for $e_3$ is
\[e_3=xxxxxxxxxxx1xxxxx1xxxxx{*}xxxxx0xxxxx10xxxx.\] 
\end{proof}

\section{Techniques for Finding Bounds on $p(G)$}\label{boupg}
In a simple coloring of the hypercube, we refer to all edges $e$ with the same value of $(l(e), r(e))$ as a \textit{color class}.  For example, all edges $e$ with $l(e) =2$ and $r(e) = 5$ are in color class $(2,5)$. We begin with an elementary example of how Lemma~\ref{simple} allows us to prove upper bounds on polychromatic numbers.

\begin{proposition}\label{q3-v}
Denote by $Q_3 \setminus v$ the graph $Q_3$ with one vertex deleted. Then
$p(Q_3 \setminus v) \le 3$.
\end{proposition}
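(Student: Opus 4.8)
The plan is to invoke Lemma~\ref{simple} to restrict attention to simple colorings, and then to exhibit a single embedding of $Q_3\setminus v$ whose edges occupy at most three color classes. Since a simple coloring assigns one color per class $(l,r)$, such an embedding can display at most three colors; as every embedding of a polychromatic coloring must contain all the colors used, this forces $p(Q_3\setminus v)\le 3$.

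Concretely, since $Q_3\setminus v$ is a subgraph of $Q_3$, Lemma~\ref{simple} tells us that if $p(Q_3\setminus v)=r$ then $Q_3$ admits a simple $(Q_3\setminus v)$-polychromatic $r$-coloring. I would then search for an embedding of $Q_3\setminus v$ inside $Q_3$ hitting as few color classes as possible. The embedding I expect to work is to take all of $Q_3$ and delete the all-ones vertex $111$ together with its three incident edges $*11$, $1*1$, $11*$. Computing $(l(e),r(e))$ for the nine surviving edges, I anticipate finding them spread over only the three classes $(0,0)$, $(0,1)$, and $(1,0)$: the three edges at $000$ lie in class $(0,0)$, the edges $*01,\,*10,\,0*1$ lie in $(0,1)$, and the edges $1*0,\,01*,\,10*$ lie in $(1,0)$. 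Under any simple coloring these nine edges therefore receive at most three distinct colors, so no simple $(Q_3\setminus v)$-polychromatic coloring can use four colors, giving $r\le 3$.

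The one genuinely non-routine step is choosing \emph{which} vertex to delete. Deleting $000$ instead would remove the entire class $(0,0)$ but leave the five classes $(0,1),(0,2),(1,0),(1,1),(2,0)$ in play, which is useless. The point of deleting the all-ones vertex is that its three incident edges $*11$, $1*1$, $11*$ are precisely the unique representatives of the three ``extreme'' classes $(0,2)$, $(1,1)$, $(2,0)$; removing them collapses the six color classes of $Q_3$ down to three. I therefore expect the crux of the argument to be verifying that these three incident edges are exactly the singleton classes, and that no surviving edge lands in $(0,2)$, $(1,1)$, or $(2,0)$. Once that bookkeeping is checked, the bound follows immediately from the definition of a simple coloring, and no further embeddings or case analysis are needed.
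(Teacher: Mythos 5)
Your proposal is correct and follows essentially the same route as the paper: both invoke Lemma~\ref{simple} to reduce to simple colorings and then delete the all-ones vertex from the subcube $[{*}{*}{*}000\ldots]$, observing that the nine surviving edges lie only in the color classes $(0,0)$, $(0,1)$, and $(1,0)$, so at most three colors can appear. The bookkeeping you identify as the crux (that $[{*}11]$, $[1{*}1]$, $[11{*}]$ are the unique representatives of $(0,2)$, $(1,1)$, $(2,0)$ within this subcube) checks out exactly as you anticipate.
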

\begin{proof}
By Lemma~\ref{simple}, we need to consider only simple colorings.
Consider the embedding of $Q_3 \setminus v$ with the vertex $[1110000\ldots]$
deleted from the cube $[{*}{*}{*}0000\ldots]$.  This graph has edges in only
three color classes, $(0,0)$, $(1,0)$, and $(0,1)$, and thus can only contain three colors in a simple coloring. 
\end{proof}

This example illustrates a general scheme for proving upper bounds on
$p(G)$: Given a graph $G$, show that in an arbitrary simple coloring
there is some embedding of $G$ in $Q_n$ that contains edges in only a small number of color classes.  For instance, applying the argument of
Proposition~\ref{q3-v} to $Q_d$ gives the upper bound in
Theorem~\ref{AKS}.  To do better, we need Lemma~\ref{pig}.

Arrange the set of color classes in a rectangular grid, with the $i$th \emph{row} containing the color classes $(a,b)$, with $a+b=i$, and the $i$th \emph{column} containing classes of the form $(i,j)$, as shown in  Figure~\ref{ccs}. We translate the question of polychromatically coloring the hypercube so every embedding of a graph $G$ contains every color into a question of coloring the grid of color classes so that every so-called \textit{shape sequence} corresponding to $G$ (which is defined below) contains every color. 

\begin{figure}
\begin{center}
\small
\begin{tikzpicture}[scale=0.9]
\draw (0,0) grid (7,-6);
  \foreach \i in {0,...,5}{
  \foreach \j in {0,...,5} {
  \pgfmathtruncatemacro{\x}{0-\i+\j} 
  \ifthenelse{\x > -1}{  \draw (\i+0.5,-\j-0.5) node{(\i,\x)} (\i,-\j-1) rectangle (\i+1,-\j);  }{}
}
  \draw (\i+0.5,-6-0.5) node{$\vdots$} (\i,-6-1) rectangle (\i+1,-6); 
}
  \draw (6+0.5,-6-0.5) node{$\ddots$} (6,-6-1) rectangle (6+1,-6); 
  
\end{tikzpicture}
\end{center}
\caption{Initial part of the grid of color classes.}\label{ccs}
\end{figure}

Define a \emph{region} of the grid to be all color classes
contained in some consecutive rows and consecutive columns. A \emph{shape} is a finite set of elements of the grid.  Two
shapes are \emph{congruent} if one is a translation of the other, i.e. if
$S=\{(a_1, b_1),(a_2, b_2),\ldots,(a_k, b_k)\}$ then $S' \cong S$ if and only
if $S' = \{(a_1+i, b_1+j),(a_2+i, b_2+j),\ldots,(a_k+i, b_k+j)\}$ for some
$i,j \in \Z$. The width $w(S)$ of a shape $S = \{(a_1, b_1),(a_2, b_2),\ldots,(a_k,
b_k)\}$ is given by $\max_{i,j} |a_i-a_j|$. We say $S$ is located at the
column of its leftmost element, i.e. $S$ is located at column $\min_i
(a_i)$. A \emph{shape list} is a finite list of shapes $S_1,\ldots, S_k$, with
the restriction that if $i<j$ then $S_i$ is not to the right of $S_j$. Two
shape lists are \emph{congruent} if each contains the same number of shapes,
and corresponding shapes in the lists are congruent and are horizontal
translations of each other.  A \emph{shape sequence} $\mathcal{S}$ is the set
of all shape lists congruent to a specific list. An \emph{instance} of a shape
sequence $\mathcal{S}$ is one particular list--when the context is clear, we
will not always distinguish between a shape sequence and an instance of a
shape sequence, since specifying any instance determines all other instances of the
sequence (see Figure~\ref{instances}).  Let the width $w(\mathcal{S})$ of a sequence equal the width of its widest shape.  For the height $h(\mathcal{S})$ of a sequence, if $i_s$ is the
smallest row index where some shape in $\mathcal{S}$ contains some element,
and if $i_l$ is the largest row index where some shape in $\mathcal{S}$
contains some element, let $h(\mathcal{S}) = i_l - i_s +1$.  Finally, given a
shape sequence $\mathcal{S}$, let $p(\mathcal{S})$ be the maximum number of
colors such that for any rectangular grid, there is a coloring of the
elements of the grid so that every instance of $\mathcal{S}$ contains an
element of every color.

\begin{figure}

\begin{center}
\newcommand{\inS}[3]{\fill[gray!60,thick]  (#1+#3,-#2-1) rectangle (#1+1+#3,-#2);  }
\begin{tikzpicture}[scale=0.6]
\foreach \x/\y in {0/0, 0/1, 0/2}{\inS{\x}{\y}{1}}
\foreach \x/\y in {0/0, 0/1, 1/1, 1/2}{\inS{\x}{\y}{3}}
\foreach \x/\y in {0/0, 1/1, 2/2}{\inS{\x}{\y}{7}}
\draw (0,0) grid (10,-3);
\end{tikzpicture}
\hskip 1em
\begin{tikzpicture}[scale=0.6]
\foreach \x/\y in {0/0, 0/1, 0/2}{\inS{\x}{\y}{2}}
\foreach \x/\y in {0/0, 0/1, 1/1, 1/2}{\inS{\x}{\y}{4}}
\foreach \x/\y in {0/0, 1/1, 2/2}{\inS{\x}{\y}{7}}
\draw (0,0) grid (10,-3);
\end{tikzpicture}
\vskip 1em
\begin{tikzpicture}[scale=0.6]
\foreach \x/\y in {0/0, 0/1, 0/2}{\inS{\x}{\y}{3}}
\foreach \x/\y in {0/0, 0/1, 1/1, 1/2}{\inS{\x}{\y}{3}}
\foreach \x/\y in {0/0, 1/1, 2/2}{\inS{\x}{\y}{6}}
\draw (0,0) grid (10,-3);
\end{tikzpicture}
\hskip 1em
\begin{tikzpicture}[scale=0.6]
\foreach \x/\y in {0/0, 0/1, 0/2}{\inS{\x}{\y}{3}}
\foreach \x/\y in {0/0, 0/1, 1/1, 1/2}{\inS{\x}{\y}{3}}
\foreach \x/\y in {0/0, 1/1, 2/2}{\inS{\x}{\y}{3}}
\draw (0,0) grid (10,-3);
\end{tikzpicture}
\end{center}
\caption{Four instances of a given shape sequence (for $Q_3$). In the two instances at the
  bottom, the shapes overlap, which is allowed as long as they remain in
  order.}\label{instances}
\end{figure}

\begin{lemma}\label{pig}
Consider a shape sequence $\mathcal{S}$ of shapes $S_1,\ldots, S_k$, with elements in rows $i_s, \ldots, i_l$.  Let
$X_j^i$ be the number of elements in $S_j$ in row $i$, and let $X^i = \max_j
X_j^i$.  Then \[p(\mathcal{S}) \le \sum_{i=i_s}^{i_l} X^i.\]
\end{lemma}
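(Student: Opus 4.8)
The plan is to prove the statement in the following quantitative contrapositive form: I will show that for every coloring of a sufficiently long grid with $N := \sum_{i=i_s}^{i_l} X^i + 1$ colors, some instance of $\mathcal{S}$ contains at most $\sum_i X^i$ colors, and hence misses one of the $N$ colors. This suffices, because if some $N$-coloring made every instance polychromatic, then merging any two color classes would yield an $(N-1)$-coloring with the same property (every instance still meets every surviving color); so the property of admitting a polychromatic coloring is preserved downward, and it is enough to defeat the single value $N = \sum_i X^i + 1$. Throughout I regard an instance as a choice of a horizontal offset $t_j$ for each shape $S_j$, subject only to the left-to-right ordering of the shapes, and I note that the rows occupied by each shape are fixed while only its columns move. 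Since the grid may be taken arbitrarily long in the column direction, I have a long interval of admissible offsets for each shape to exploit.

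The heart of the argument is to translate the shapes so that, row by row, the instance sees few colors. I would process the shapes from left to right and induct on their number $k$. Writing $M_{m}^i := \max_{j\le m} X_j^i$, the target bound $\sum_i M_m^i$ increases, when the $m$th shape is appended to $S_1,\dots,S_{m-1}$, by exactly $\sum_i \bigl(X_m^i - M_{m-1}^i\bigr)^+$. Thus it is enough to fix a good instance of the subsequence $S_1,\dots,S_{m-1}$ and then choose an offset for $S_m$ that contributes at most $\bigl(X_m^i - M_{m-1}^i\bigr)^+$ \emph{new} colors in each row $i$: in every row where $S_m$ has at most $M_{m-1}^i$ cells, all of its cells should land on colors already present, and in the remaining rows only the genuine surplus of cells is exposed. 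The existence of such an offset is where the long interval of admissible positions and the pigeonhole principle enter, since as $S_m$ slides across the strip the finitely many colors force the color pattern on its cells to recur, and I would use this recurrence to place $S_m$ on a translate whose row-$i$ cells reuse the colors contributed by the dominating shape of the already-built instance.

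The main obstacle is precisely the coupling between rows: a single offset $t_m$ simultaneously fixes where all rows of $S_m$ land, so the rows cannot be optimized independently, and an alignment that reuses colors in one row may expose new colors in another. Handling this is the technical core, and my plan is to defeat it by building the instance of $S_1,\dots,S_{m-1}$ with a long, controlled (e.g.\ periodic) structure along the strip, so that a single common offset for $S_m$ realizes the desired reuse in all rows at once. A clean alternative I would pursue in parallel is an extremal argument: take an instance $I$ minimizing the number of colors it contains, charge each realized color to a row in which it appears, and show by a local sliding exchange that no row can be charged more than $X^i$ times without contradicting the minimality of $I$. Either route reduces the whole statement to the row-wise count $\sum_i X^i$, which is strictly smaller than $N$, producing the required missing color.
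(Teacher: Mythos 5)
Your reformulation (it suffices to defeat exactly $N=\sum_i X^i+1$ colors, by downward closure under merging color classes) is fine, but the core of your argument is a gap, not a proof. Your inductive step requires: having fixed an instance of $S_1,\ldots,S_{m-1}$, there is an admissible offset for $S_m$ at which every row-$i$ cell of $S_m$ beyond the surplus $\bigl(X_m^i-M_{m-1}^i\bigr)^+$ lands on a color already present. This fails already in the simplest case: take $k=2$ with $S_1,S_2$ single cells in one row, and color the cell under your chosen placement of $S_1$ with color $1$ and every column to its right with color $2$; no admissible placement of $S_2$ reuses color $1$. So the obstacle is not only the row coupling you flag but the order of quantifiers: the prefix must be placed with foresight about the entire coloring, and a fix-the-prefix-then-extend induction cannot supply that. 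The recurrence/pigeonhole remedy you sketch shows only that the color pattern on a window of width $w(\mathcal{S})$ recurs; placing all $k$ shapes on occurrences of one recurring pattern makes each $S_j$ see the colors of its own cells in that fixed pattern, and since different shapes occupy different cells of the pattern this yields only the trivial bound $\sum_i\sum_j X_j^i$, not $\sum_i\max_j X_j^i$. Forcing distinct shapes to reuse the \emph{same} colors row by row is exactly the content of the lemma, and neither of your strategies supplies a mechanism for it; the extremal alternative (minimize the number of colors and charge each color to a row) is likewise stated without any argument for why a minimal instance is charged at most $X^i$ times in row $i$.

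The paper's proof avoids this constructive route entirely and argues globally by contradiction plus double counting. Assuming every instance is polychromatic, for each color $l$ a greedy left-to-right sweep partitions the columns into at most $k$ intervals so that every copy of $S_j$ located in the $j$th interval contains $l$ (if $S_1,\ldots,S_{k-1}$ fail at successive positions $c_1^l\le\cdots\le c_{k-1}^l$, polychromaticity forces every copy of $S_k$ at positions $\ge c_{k-1}^l$ to contain $l$). All colors together contribute at most $k\cdot p(\mathcal{S})+1$ breakpoints, so on a sufficiently long grid there is a window of width $m\gg w(\mathcal{S})$ in which each color $l$ is ``owned'' by a single shape $S^l$: every translate of $S^l$ in the window contains $l$. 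Each occurrence of $l$ in row $i$ lies in at most $X^i$ translates of $S^l$, while there are at least $m-w(\mathcal{S})$ translates, so $\sum_i l_i X^i\ge m-w(\mathcal{S})$; summing over colors and using $\sum_l l_i=m$ gives $m\sum_i X^i\ge (m-w(\mathcal{S}))\,p(\mathcal{S})$, and letting $m\to\infty$ finishes. If you want to salvage your plan, this ownership-plus-averaging step is what must replace your inductive placement.
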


\begin{proof}
Consider a region $R$ with $h(\mathcal{S})$ rows and $n$ columns colored with colors $\{1,2, \ldots, p(\mathcal{S})\}$.  Assume that every instance of $\mathcal{S}$ in this region contains every color, i.e. it is not possible to find an instance of $\mathcal{S}$ in these rows where every shape in the sequence lacks a particular color.  Thus for each color $l$, $1 \le l \le p(\mathcal{S})$, we can partition the interval $[1,n]$ into $k_l$ intervals $[1,c^l_1), [c^l_1,c^l_2), \ldots, [c^l_{k_l-1}, n]$, with the property that $k_l \le k$ and all copies of $S_j$ located at columns in the $j$th interval contain color $l$. We adopt the convention that $c^l_0=1$, $c^l_{k_l}=n$ and if $c^l_{j-1}=c^l_j $ then the interval $[c^l_{j-1},c^l_j)$ is empty. The following procedure describes how to do this for a given color $l$.

\begin{enumerate}
\item Set $\alpha =1$. 
\item If all copies of $S_\alpha$ at locations $\ge c^l_{\alpha-1}$ contain color $l$, then
\begin{itemize}
\item Set $c^l_\alpha = n$ 
\item Set $k_l = \alpha$. 
\item STOP.
\end{itemize}
\item Else
\begin{itemize}
\item Let $c^l_\alpha$ be the smallest number such that $c^l_\alpha \ge c^l_{\alpha-1}$ and the copy of $S_\alpha$ at column $c^l_\alpha$  does not contain color $l$. 
\item Increment $\alpha$ by 1.
\item Return to Step 2.
\end{itemize}
\end{enumerate}

The condition that every instance of $\mathcal{S}$ contains every color guarantees that this procedure returns a partition: If the procedure reaches a state where $\alpha=k$, then there are values $c_1^l \le c_2^l \le \cdots \le c_{k-1}^l$ such that the shape $S_j$ at location $c_j^l$ does not contain color $l$.  Thus since every instance of $\mathcal{S}$ contains $l$, every copy of $S_k$ at location $\ge c_{k-1}^l$ must contain color $l$, and the procedure will terminate in Step 2.  Let $C$ be the set $\{1, c_1^1, c_2^1, \ldots, c_{k_1}^1, c_1^2, c_2^2, \ldots, c_{k_2}^2, \ldots, c_1^{p(\mathcal{S})}, c_2^{p(\mathcal{S})}, \ldots, c_{k_{p(\mathcal{S})}}^{p(\mathcal{S})}\}$.  Relabel the elements of $C$ so that $C= \{c_1, c_2, \ldots, c_q\}$ and $c_1 \le c_2 \le \cdots \le c_q$.  Since $q \le k\cdot p(\mathcal{S}) + 1$ and we can choose $n$ as large as we want, we can find a difference $c_p-c_{p-1}$ as large as we want. Choose $n$ large enough so that this number $m = c_p-c_{p-1}$ is much bigger than $w(\mathcal{S})$. This difference corresponds to a region $R'$ with $m$ columns (the columns in the interval $[c_{p-1},c_p)$), where for each color $l$ there is a shape $S^l \in \mathcal{S}$ such that every copy of $S^l$ contains the color $l$. 

For each color $l$, and all $1 \le i \le h(\mathcal{S})$ let $l_i$ be the number of times the color
$l$ appears in the $i$th row in $R'$. Any appearance of $l$ in the $i$th row can be contained in at most $X^i$ copies of $S^l$.  There are at least
$m-w(\mathcal{S})$ copies of $S^l$ in the region, and thus \[ l_1X^1
+ l_2X^2 + \cdots + l_{h(\mathcal{S})}X^{h(\mathcal{S})} \ge
m-w(\mathcal{S}). \] 

Since there are $m$ columns in the region $R'$, $1_i + 2_i + \cdots + p(\mathcal{S})_i = m$, and if we add up the equations for each color, we get
\[ X^1m + X^2m + \cdots + X^{h(\mathcal{S})}m \ge (m-w(\mathcal{S}))p(\mathcal{S}). \] 
To finish the proof, divide both sides by $m$, and note that by making $m$ large, $(m-w(\mathcal{S}))/m$ can be as close to 1 as desired.
\end{proof}

Now to prove upper bounds on $p(G)$, we translate problems about
polychromatically coloring graphs into problems about polychromatically
coloring shape sequences.  We consider an arbitrary simple coloring of an
enormous hypercube.  Then we note that the color classes covered by an
embedding of $G$ are a shape sequence $\mathcal{S}$ in the grid of color
classes.  Further, any instance of $\mathcal{S}$ corresponds to the color
classes covered by the edges of some embedding of $G$.  Since Lemma~\ref{pig}
gives an upper bound on $p(\mathcal{S})$, we get an upper bound on $p(G)$. 

As an example of how to apply Lemma~\ref{pig}, we now prove the upper bound
on $p(Q_d)$ in Theorem~\ref{pd}. This result was originally proved in \cite{Off08}, but
this proof is more streamlined, and will provide useful preparation
for proving new results later. Define an $i \times j$ \emph{parallelogram} to be a set of color classes of the following form: $\{(a + \alpha , b + \beta): 0 \le \alpha < j, 0 \le \beta
< i \}$.  We say that a color class is at coordinate $(\alpha,\beta)$ in such a parallelogram if it is of the form $(a + \alpha , b + \beta)$.

\begin{figure}
\begin{center}
\small
\newcommand{\inS}[3]{\fill[gray!40]  (#1+#3,-#2-1) rectangle (#1+1+#3,-#2);  }
\begin{tikzpicture}[scale=1.1]
\foreach \x/\y in {0/0, 0/1, 0/2,0/3}{\inS{\x}{\y}{3}}
\foreach \x/\y in {0/0, 0/1, 0/2, 1/1,1/2,1/3}{\inS{\x}{\y}{5}}
\foreach \x/\y in {0/0, 0/1, 1/1, 1/2,2/2,2/3}{\inS{\x}{\y}{8}}
\foreach \x/\y in {0/0, 1/1, 2/2, 3/3}{\inS{\x}{\y}{10}}
  \foreach \i in {0,...,13}{
  \foreach \j in {0,...,3} {
  \pgfmathtruncatemacro{\x}{10-\i+\j} 
  \ifthenelse{\x > -1}{  \draw (\i+0.5,-\j-0.5) node{(\i,\x)} (\i,-\j-1) rectangle (\i+1,-\j);  }{}
}}  
\end{tikzpicture}
\end{center}
\caption{The shape sequence corresponding to the embedding  $[1101{*}100010{*}111{*}00101{*}]$  of $Q_4$ in $Q_{22}$.}\label{fig-q4}
\end{figure}

For an example of a shape sequence corresponding to an embedding of a graph, consider the  embedding $[1101{*}100010{*}111{*}00101{*}]$ of $Q_4$ in $Q_{22}$ 
(see Figure~\ref{fig-q4}).
Edges using the leftmost star are in color classes (3,7), (3,8), (3,9), and (3,10), a $4 \times 1$ parallelogram.
Edges using the second star from the left are in color classes (5,5), (5,6), (5,7), (6,5), (6,6), and (6,7), a $3 \times 2$ parallelogram.
Edges using the third star from the left are in color classes (8,2), (8,3), (9,2), (9,3), (10,2), and (10,3),  a $2 \times 3$ parallelogram.
Edges using the fourth star from the left are in color classes (10,0), (11,0), (12,0), and (13,0), a $1 \times 4$ parallelogram. Thus the shape sequence corresponding to $Q_4$ consists of four parallelograms, all occupying the same four rows, where each parallelogram corresponds to the edges using one of the four stars.  Further,  we can create any other instance of this shape sequence in the same four rows by rearranging some of the stars.  For example, $[1101{*}100010{*}111{*}{*}00101]$ would have the first three shapes identical, with the fourth shape shifted two columns to the left.  These observations are generalized for $Q_d$ in the following fact.

\begin{fact}\label{shseqqd}
Let $n \ge d \ge 1$. Every shape sequence for an embedding of $Q_d$ in $Q_n$ consists of $d$ shapes $S_1, \ldots, S_d$ where $S_i$ is a $(d-i+1) \times i$ parallelogram, and each shape occupies the same $d$ rows. The color classes in $S_i$ correspond to the edges using the $i$th star from the left.  Conversely, every instance of such a shape sequence corresponds to some embedding of $Q_d$ in $Q_n$.
\end{fact}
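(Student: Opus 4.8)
The plan is to prove Fact~\ref{shseqqd} by directly computing the color classes covered by each parallel class of edges in an arbitrary embedding of $Q_d$, and then verifying the converse by exhibiting, for any target instance, an embedding that realizes it. The key observation is that the star positions of an embedding partition its edges into $d$ parallel classes (one per star), and the prefix/postfix sums $l(e), r(e)$ of an edge depend only on how many of the \emph{other} $d-1$ stars lie to its left versus right, together with the fixed $0$/$1$ coordinates surrounding them.

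**First I would** set up notation for a general embedding of $Q_d$ in $Q_n$: it is determined by choosing $d$ star positions $j_1 < j_2 < \cdots < j_d$ and fixing the remaining $n-d$ coordinates to be $0$ or $1$. Consider an edge $e$ using the $i$th star (so its flip bit is $j_i$); all other $d-1$ stars must be assigned a value in $\{0,1\}$ to specify $e$. I would compute $l(e) = A_i + (\text{number of the }i-1\text{ stars to the left of }j_i\text{ set to }1)$, where $A_i$ is the fixed count of $1$'s among the non-star coordinates left of $j_i$; the $i-1$ left stars can independently be $0$ or $1$, so $l(e)$ ranges over $i$ consecutive values $A_i, A_i+1, \ldots, A_i + (i-1)$. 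Symmetrically $r(e)$ ranges over the $d-i+1$ consecutive values $B_i, \ldots, B_i + (d-i)$, where $B_i$ counts fixed $1$'s to the right. Since the left-star and right-star assignments are independent, the set of pairs $(l(e), r(e))$ is exactly the product, i.e.\ the $(d-i+1) \times i$ parallelogram $\{(A_i + \alpha, B_i + \beta) : 0 \le \alpha < i,\ 0 \le \beta < d-i\}$ anchored at $(A_i, B_i)$. This matches the claimed shape $S_i$.

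**Next I would** check that all $d$ shapes occupy the same $d$ rows. The row of a color class $(a,b)$ is $a+b$, which is the total number of $1$'s among the vertex coordinates other than the flip bit; for a vertex of the embedding this total is (number of fixed $1$'s) plus (number of stars set to $1$), and as the chosen vertex ranges over the $2^d$ vertices of the embedded cube, this total ranges over exactly $d+1$ consecutive values. Hence every edge lies in one of the same $d$ rows (an edge touches two adjacent rows, corresponding to the two endpoints of its flip bit), independent of $i$, giving the common row range.

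**For the converse,** I would argue that any instance of this shape sequence — specified by $d$ parallelograms of the prescribed dimensions, in the correct row range, with the left-to-right ordering and chosen horizontal offsets — can be realized by an embedding. Given the desired anchor columns, I choose the fixed $0$/$1$ coordinates and the gaps between consecutive stars to produce exactly the prescribed $A_i$ and $B_i$ values; because $n$ can be taken as large as needed, there is always room to insert enough fixed coordinates to achieve any prescribed prefix/postfix counts and horizontal shifts. **The main obstacle** I anticipate is the bookkeeping in this converse direction: one must confirm that a single consistent assignment of non-star coordinates simultaneously realizes all $d$ anchor positions, since the $A_i$ and $B_i$ are not independent (they are cumulative sums along one shared vertex), so I would phrase the realization as choosing nonnegative gap-lengths between successive stars and verifying that the resulting telescoping sums hit the target anchors exactly.
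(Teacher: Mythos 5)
Your proof is correct and takes essentially the same approach as the paper, which states Fact~\ref{shseqqd} without a formal proof, justifying it only through the worked example $[1101*100010*111*00101*]$; your prefix/postfix-sum computation showing each star's edges form the parallelogram $\{(A_i+\alpha,\, B_i+\beta)\}$, the common-row argument via $A_i+B_i$ being independent of $i$, and the telescoping-gap realization of the converse are exactly the general form of that observation (and the gap construction correctly resolves the dependency among the anchors that you flagged). Two small points to fix: your displayed set should read $0 \le \beta \le d-i$ rather than $0 \le \beta < d-i$ to capture all $d-i+1$ values of $r(e)$, and the appeal to taking $n$ large is unnecessary for the converse as stated, since any instance lying in the grid of color classes of $Q_n$ satisfies $A_i + B_i = F \le n-d$, so your nonnegative gaps already fit within the given $n$ coordinates.
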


See Figures~\ref{instances}, \ref{4cube}, and~\ref{5cube} for examples of shape sequences corresponding to $Q_3$, $Q_4$, and $Q_5$, respectively.

\begin{proofof}{Theorem~\ref{pd}, upper bound}
By Lemma~\ref{simple}, we may consider a simple $Q_d$-polychromatic $p(Q_d)$-coloring on an arbitrarily large hypercube. Fact 6 describes the shape sequence for $Q_d$. For any of the parallelograms in the shape sequence, the maximum number of color classes in the $i$th row is $\min\{i, d-i+1\}$. Thus, using the notation of Lemma~\ref{pig}, in the shape sequence for $Q_d$, $X^i = \max_j X_j^i = \min\{i, d-i+1\}$. Applying
Lemma~\ref{pig}, we get $p(Q_d) \le 1 + 2 + \cdots + \lceil d/2 \rceil +
\cdots + 2 + 1 = (d+1)^2/4$ if $d$ is odd, and $d(d+2)/4$ if $d$ is
even. 
\end{proofof}

\begin{figure}
\begin{center}
\newcommand{\inS}[3]{\fill[gray!60]  (#1+#3,-#2-1) rectangle (#1+1+#3,-#2);  }
\begin{tikzpicture}[scale=0.6]
\foreach \x/\y in {0/0, 0/1, 0/2,0/3}{\inS{\x}{\y}{2}}
\foreach \x/\y in {0/0, 0/1, 0/2,1/1, 1/2,1/3}{\inS{\x}{\y}{5}}
\foreach \x/\y in {0/0, 0/1, 1/1, 1/2,2/2,2/3}{\inS{\x}{\y}{10}}
\foreach \x/\y in {0/0, 1/1, 2/2,3/3}{\inS{\x}{\y}{14}}
\draw (0,0) grid (21,-4);
\end{tikzpicture}
\end{center}
\caption{A shape sequence for $Q_4$.}\label{4cube}
\end{figure}

\begin{figure}
\begin{center}
\newcommand{\inS}[3]{\fill[gray!60]  (#1+#3,-#2-1) rectangle (#1+1+#3,-#2);  }
\begin{tikzpicture}[scale=0.6]
\foreach \x/\y in {0/0, 0/1, 0/2,0/3,0/4}{\inS{\x}{\y}{1}}
\foreach \x/\y in {0/0, 0/1, 0/2,0/3,1/1, 1/2,1/3,1/4}{\inS{\x}{\y}{3}}
\foreach \x/\y in {0/0, 0/1,0/2, 1/1, 1/2,1/3,2/2,2/3,2/4}{\inS{\x}{\y}{8}}
\foreach \x/\y in {0/0, 0/1, 1/1, 1/2,2/2,2/3,3/3,3/4}{\inS{\x}{\y}{12}}
\foreach \x/\y in {0/0, 1/1, 2/2,3/3,4/4}{\inS{\x}{\y}{16}}
\draw (0,0) grid (21,-5);
\end{tikzpicture}
\end{center}
\caption{A shape sequence for $Q_5$.}\label{5cube}
\end{figure}

We now turn our attention to lower bounds. To prove lower bounds on $p(G)$, we explicitly describe a simple coloring of the hypercube by assigning a particular color to each color
class. Then we analyze what color classes must be contained in the shape sequence of any embedding of the graph $G$, and show it contains all colors. For instance, here is a proof of the lower bound for Theorems~\ref{AKS} and \ref{pd}:

\begin{proofof}{Theorems~\ref{AKS} and \ref{pd}, lower bound}\label{pot1}
Consider the simple coloring $\chi$ where $\chi(e) =\lceil \frac{d+1}{2} \rceil \cdot l(e) + r(e) \pmod{q}$, where $q=\frac{(d+1)^2}{4}$ if $d$ is odd, and $q= \frac{d(d+2)}{4}$ if $d$ is even. Then the $\lceil \frac{d}{2} \rceil $th shape in the shape sequence for $Q_d$ is a $\lceil \frac{d+1}{2} \rceil \times \lceil \frac{d}{2} \rceil$ parallelogram, and thus contains $q$ color classes.  The elements in each column in this parallelogram contain $\lceil \frac{d+1}{2} \rceil$ consecutive colors $\pmod{q}$, and no two columns of the parallelogram share any colors, so this shape contains all $q$ colors regardless of its position in the grid of color classes.  Thus any shape sequence corresponding to $Q_d$ contains all colors.
\end{proofof}

\section{Polychromatic Numbers for Punctured Cubes}\label{presults}

We now use the techniques of Section~\ref{boupg} to
determine $p(G)$ for other subgraphs $G$ of the hypercube besides subcubes.  In
each case, we explicitly describe a simple coloring to prove a lower bound on $p(G)$, and
examine possible shape sequences corresponding to $G$ and apply
Lemma~\ref{pig} to prove an upper bound. 

We consider graphs which are obtained by deleting a vertex or edge from a hypercube.  We call such graphs \textit{punctured hypercubes} or \textit{punctured cubes} for short.  Let $Q_d \setminus v$ and $Q_d \setminus e$ denote the graphs obtained by deleting one
vertex or one edge from $Q_d$, respectively. If $d$ is odd we call these \textit{odd punctured cubes}, and if $d$ is even we call them \textit{even punctured cubes}. When embedding a punctured cube in a larger hypercube, we can think of generating
each embedding by first embedding $Q_d$, then deleting a given
vertex or edge from the embedded graph (not from the base graph of
course).  The shape sequence corresponding to such an embedding depends on
which vertex or edge is deleted.  Since an embedding of $Q_d$ in
$Q_n$ can be represented as an $n$-bit vector with $d$ stars, we
say that deleting the vertex corresponding to some $d$-bit string 
corresponds to deleting from the embedding of $Q_d$ that vertex in
$Q_n$ which replaces the $d$ stars with that given $d$-bit string.  We
also use the same notion for edges.  For example, we might consider the
subgraph $Q_3 \setminus v$ in $Q_6$ corresponding to the embedding $[01{*}0{*}{*}]$ of
$Q_3$ with the vertex corresponding to [011]
deleted, so the embedded subgraph will not contain the vertex
[010011] or any incident edges.

\begin{proposition}\label{submono}
If $G$ is a subgraph of $H$, and both are subgraphs of the hypercube, then $p(G) \le p(H)$.  
\end{proposition}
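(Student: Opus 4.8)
The plan is to observe that, when $G\subseteq H$, the $G$-polychromatic condition is strictly stronger than the $H$-polychromatic condition, so a single coloring can serve both purposes. Concretely, I will prove the following per-hypercube statement: for every $n$ and every $r$, if a coloring $\chi$ of $Q_n$ is $G$-polychromatic, then the very same $\chi$ is $H$-polychromatic. Feeding this into the definition of the polychromatic number then yields $p(G)\le p(H)$, whichever way the quantifier over the ambient hypercube in that definition is read.

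The heart of the argument is that every embedded copy of $H$ contains an embedded copy of $G$. I would take an arbitrary embedding $H'$ of $H$ in $Q_n$ --- that is, a subgraph $H'\subseteq Q_n$ with $H'\cong H$ --- and use the inclusion $G\subseteq H$ together with the isomorphism $H\cong H'$ to produce a subgraph $G'\subseteq H'$ with $G'\cong G$. This $G'$ is by definition an embedding of $G$ in $Q_n$, and crucially $E(G')\subseteq E(H')$. Since $\chi$ is $G$-polychromatic, the edges of $G'$ already realize all $r$ colors; because these edges also lie in $H'$, the copy $H'$ contains an edge of every color. As $H'$ was arbitrary, $\chi$ is $H$-polychromatic.

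To finish, I would set $r=p(G)$ and recall (directly from the definition, or via Lemma~\ref{simple} if a concrete ambient cube is preferred) that some $Q_n$ carries a $G$-polychromatic $r$-coloring; by the step above this same coloring is an $H$-polychromatic $r$-coloring, so $p(H)\ge r=p(G)$.

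The proof is genuinely short, and I do not anticipate a real obstacle. The only point needing care is definitional bookkeeping: one must keep the same ambient cube $Q_n$ for both conditions, so that ``every embedding of $H$'' and ``every embedding of $G$'' refer to the same host graph, and one must note that passing from the sub-embedding $G'$ to the larger copy $H'$ can only add edges, hence can only add colors --- it never destroys the colors already guaranteed on $G'$. The ``hard part,'' such as it is, is simply recognizing that subgraph containment translates into an implication between the two polychromaticity requirements in exactly the favorable direction.
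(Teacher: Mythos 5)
Your proof is correct: the key step---that every embedding $H'$ of $H$ in $Q_n$ contains an embedding $G'$ of $G$ with $E(G')\subseteq E(H')$, so any $G$-polychromatic coloring is automatically $H$-polychromatic---is exactly the standard observation at work here, and your handling of the quantifier over the ambient cube is sound. The paper in fact states Proposition~\ref{submono} without any proof, treating it as immediate, so your argument supplies precisely the reasoning the authors leave implicit.
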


\begin{corollary}\label{puncmon}
For any $d \ge 2$,
\[p(Q_{d-1}) \le p(Q_d\setminus v) \le p(Q_d\setminus e) \le p(Q_d).\]
\end{corollary}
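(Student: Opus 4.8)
The plan is to derive all three inequalities from Proposition~\ref{submono}, which reduces each one to exhibiting a subgraph containment between the relevant punctured cubes (all of which are themselves subgraphs of the hypercube, so the proposition applies). The only content beyond invoking the proposition is verifying the three containments, together with a use of the vertex- and edge-transitivity of $Q_d$ to ensure that the quantities $p(Q_d\setminus v)$ and $p(Q_d\setminus e)$ are well-defined independent of which vertex or edge is removed.

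For the rightmost inequality $p(Q_d\setminus e)\le p(Q_d)$, I would simply observe that $Q_d\setminus e$ is obtained from $Q_d$ by deleting an edge, hence is a subgraph of $Q_d$; Proposition~\ref{submono} then gives the bound immediately.

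For the middle inequality $p(Q_d\setminus v)\le p(Q_d\setminus e)$, the key point is that since $Q_d$ is edge-transitive, the value $p(Q_d\setminus e)$ does not depend on the choice of deleted edge, so I am free to take $e$ to be one of the $d$ edges incident to the deleted vertex $v$. Deleting $v$ removes $v$ together with all $d$ of its incident edges, whereas deleting $e$ removes only that single edge; consequently $Q_d\setminus v$ is a subgraph of $Q_d\setminus e$, and Proposition~\ref{submono} applies.

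The leftmost inequality $p(Q_{d-1})\le p(Q_d\setminus v)$ requires one small structural idea. Here I would use the fact that $Q_d$ splits into two vertex-disjoint facets, each a copy of $Q_{d-1}$ (for instance, the vertices whose last coordinate is $0$ and those whose last coordinate is $1$). Deleting a single vertex $v$ affects only the facet containing $v$, so the opposite facet survives intact as a copy of $Q_{d-1}$ contained in $Q_d\setminus v$. Thus $Q_{d-1}\subseteq Q_d\setminus v$, and Proposition~\ref{submono} finishes the argument. The only step demanding any care---and the nearest thing to an obstacle---is the choice of representative edge in the middle inequality, which relies on transitivity of $Q_d$ to guarantee that the containment $Q_d\setminus v\subseteq Q_d\setminus e$ can be arranged without altering the polychromatic numbers in question.
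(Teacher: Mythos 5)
Your proof is correct and follows exactly the route the paper intends: the corollary is stated without proof as an immediate consequence of Proposition~\ref{submono}, applied to the containment chain $Q_{d-1} \subseteq Q_d\setminus v \subseteq Q_d \setminus e \subseteq Q_d$ (with $e$ chosen incident to $v$), which is precisely what you verify. Your added remark on vertex- and edge-transitivity, ensuring $p(Q_d\setminus v)$ and $p(Q_d\setminus e)$ are well-defined and that $e$ may be chosen incident to $v$, is a sensible bit of extra care rather than a deviation.
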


Letting $d=2k$ when $d$ is even, or $d=2k-1$ when $d$ is odd, Theorem~\ref{pd} states that $p(Q_{2k-1}) = k^2$ and $p(Q_{2k}) = k^2+k$.  Thus Corollary~\ref{puncmon} implies for odd punctured cubes 

\[k^2-k = p(Q_{2k-2}) \le p(Q_{2k-1}\setminus v) \le p(Q_{2k-1}\setminus e) \le p(Q_{2k-1}) = k^2,\]

and for even punctured cubes

\[k^2 = p(Q_{2k-1}) \le p(Q_{2k}\setminus v) \le p(Q_{2k}\setminus e) \le p(Q_{2k}) = k^2+k. \]

In the case of odd punctured cubes, we determine the polychromatic number exactly. In the case of even punctured cubes, we give upper and lower bounds but, with a few exceptions, we do not know the exact polychromatic number.

\begin{lemma}\label{ub-v}
For all $d\ge 2$, $p(Q_d \setminus v)  \le p(Q_d) - 1$.
\end{lemma}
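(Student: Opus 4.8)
The plan is to produce, for one cleverly chosen deleted vertex $v$, an embedding of $Q_d\setminus v$ whose shape sequence $\mathcal S$ satisfies $\sum_i X^i\le p(Q_d)-1$; then Lemma~\ref{pig} together with the translation from polychromatic colorings to shape sequences gives $p(Q_d\setminus v)\le p(\mathcal S)\le p(Q_d)-1$. By Lemma~\ref{simple} we may restrict to simple colorings, and by Fact~\ref{shseqqd} the shape sequence of $Q_d$ is the list of parallelograms $S_1,\ldots,S_d$ occupying a common block of $d$ rows, for which $X^i=\min\{i,d-i+1\}$ and $\sum_i X^i=p(Q_d)$. Deleting a vertex $v=(x_1,\ldots,x_d)$ removes exactly the $d$ edges at $v$, one per direction, hence one element from each shape $S_j$: the direction-$j$ edge at $v$, which sits in the row at height $w(v)-x_j$ above the bottom row of the shapes, where $w(v)=\sum_i x_i$. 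This removed element depends only on $v$ as a $d$-bit string and not on the embedding, so the punctured list $\mathcal S$ is a genuine shape sequence and every one of its instances is realized by an embedding of $Q_d\setminus v$.

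First I would locate the peak. A direct lattice-point count shows that $S_j$ contributes $\min\{i,\,d-i+1,\,j,\,d-j+1\}$ elements to row $i$, so the maximum value $\lceil d/2\rceil$ of $X^i$ is attained in row $i=\lceil d/2\rceil$, and in that row only by the single shape $S_{(d+1)/2}$ when $d$ is odd and by the two shapes $S_{d/2}$ and $S_{d/2+1}$ when $d$ is even.

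The key step is to choose $v$ so that every peak shape is punctured inside the peak row $i=\lceil d/2\rceil$; this forces $X^{\lceil d/2\rceil}$ to drop by one, since every other shape already has strictly fewer than $\lceil d/2\rceil$ elements in that row. Writing $m=\lceil d/2\rceil$, I would take $v=(\,\underbrace{1,\ldots,1}_{m-1},0,\ldots,0\,)$, so that $w(v)=m-1$. Then for every index $j\ge m$ the removed element of $S_j$ sits at height $w(v)-x_j=m-1$, i.e.\ in row $m$. Since the peak shapes are exactly $S_m$ (odd case) and $S_m,S_{m+1}$ (even case), all of which have index $\ge m$, each peak shape loses its peak-row element; hence after puncturing no shape has $\lceil d/2\rceil$ elements in row $m$, so $X^m$ decreases from $\lceil d/2\rceil$ to $\lceil d/2\rceil-1$. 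As puncturing can only decrease each $X^i$, this yields $\sum_i X^i\le p(Q_d)-1$, and Lemma~\ref{pig} completes the argument.

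I expect the even case to be the main obstacle: there the peak value is attained simultaneously by the two shapes $S_{d/2}$ and $S_{d/2+1}$, and a single deleted vertex must puncture both of them within the same row. The requirement that both peak shapes lose a peak-row element pins down the relevant bits of $v$ (it forces $x_{d/2}=x_{d/2+1}$), and taking both equal to $0$ while packing the $m-1$ ones into the leftmost coordinates resolves this and keeps $v$ a legitimate vertex for every $d\ge 2$, including the degenerate base case $d=2$, where $v$ is the all-zeros vertex. The one computation deserving care is verifying that, for this $v$, each peak parallelogram really is punctured in the row where its element count is maximal, rather than in an adjacent row.
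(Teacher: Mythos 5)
Your framework (Lemma~\ref{simple}, Fact~\ref{shseqqd}, Lemma~\ref{pig}) is right, but the step ``deleting a vertex removes one element from each shape $S_j$'' is a genuine gap, and it sinks the even case. An element of a shape is a \emph{color class}, and the class of $S_j$ at coordinate $(\lambda,\rho)$ (meaning $\lambda$ of the stars left of star $j$ and $\rho$ of the stars right of it are set to $1$) contains $\binom{j-1}{\lambda}\binom{d-j}{\rho}$ parallel edges of the embedding. Deleting $v$ removes only the single direction-$j$ edge incident to $v$, so the element actually disappears from $S_j$ only when this product is $1$, i.e.\ only at the four corner classes of the parallelogram. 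With your $v=1^{m-1}0^{d-m+1}$, $m=\lceil d/2\rceil$, the direction-$m$ edge is a corner class of $S_m$ (left stars all $1$, right stars all $0$), so the odd case goes through. But for even $d=2m$, the direction-$(m+1)$ edge at $v$ lies in the class of $S_{m+1}$ at coordinate $(m-1,0)$, which contains $\binom{m}{m-1}=m\ge 2$ edges; the other $m-1$ survive the deletion, $S_{m+1}$ keeps all $m$ of its row-$m$ elements, and $X^m$ does not drop. Concretely for $d=4$, $v=[1000]$: the deleted direction-$3$ edge (star pattern $[10{*}0]$) shares its class with $[01{*}0]$, which remains, so Lemma~\ref{pig} still yields only $p\le 6$, not $5$.

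Moreover, the peak-row strategy cannot be repaired. For both peak shapes $S_m,S_{m+1}$ to lose corner (unique-edge) classes, the bits of $v$ to the left and right of each of stars $m$ and $m+1$ must be constant, forcing $v=a^m b^m$ with $a,b\in\{0,1\}$; if $a\ne b$ the two punctured elements land in different peak rows (rows $m$ and $m+1$), so neither $X^m$ nor $X^{m+1}$ drops, and if $a=b$ then $v$ is all ones or all zeros and the punctures sit in the extreme row, not the peak row. That last case is exactly the paper's proof: take $v=[1\cdots 1]$; the $d$ deleted edges are precisely the unique edges in the top-row classes of the $d$ parallelograms, so every shape loses its single element in the $d$th row, $X^d$ drops from $1$ to $0$ while all other $X^i$ are unchanged, and Lemma~\ref{pig} gives $p(Q_d\setminus v)\le p(Q_d)-1$ uniformly in the parity of $d$ --- a one-line argument once the corner/multiplicity issue is recognized (the same multiplicity point the paper exploits explicitly in the proof of Theorem~\ref{qmv}).
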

\begin{proof}
Recall that the shape sequence corresponding to  $Q_d$ is a sequence of $d$ parallelograms as described in Fact~\ref{shseqqd}.  If the vertex corresponding to $[11 \ldots 1]$ is deleted from $Q_d$, then the shape sequence corresponding to this embedding is identical to the shape sequence for $Q_d$ except the single element in the $d$th row is deleted from each parallelogram. For example, for $Q_4 \setminus v$, the resulting shape sequence is shown in Figure~\ref{4mv}.  Since the rest of the rows are not altered, the result follows from  Lemma~\ref{pig}.
\end{proof}

\begin{figure}
\begin{center}
\newcommand{\inS}[3]{\fill[gray!60]  (#1+#3,-#2-1) rectangle (#1+1+#3,-#2);  }
\begin{tikzpicture}[scale=0.6]
\foreach \x/\y in {0/0, 0/1, 0/2}{\inS{\x}{\y}{2}}
\foreach \x/\y in {0/0, 0/1, 0/2,1/1, 1/2}{\inS{\x}{\y}{5}}
\foreach \x/\y in {0/0, 0/1, 1/1, 1/2,2/2}{\inS{\x}{\y}{10}}
\foreach \x/\y in {0/0, 1/1, 2/2}{\inS{\x}{\y}{14}}
\draw (0,0) grid (21,-3);
\end{tikzpicture}
\end{center}
\caption{A shape sequence for $Q_4\setminus v$ where $v$ corresponds to $[1111]$.}\label{4mv}
\end{figure}

\begin{theorem}\label{qmv}
For all $k \ge 2$, $p(Q_{2k-1} \setminus v) = p(Q_{2k-1} \setminus e) = k^2-1$.
\end{theorem}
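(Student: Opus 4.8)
The plan is to prove the two-sided equality by establishing the lower bound for the sparser graph $Q_{2k-1}\setminus v$ and the upper bound for the denser graph $Q_{2k-1}\setminus e$, and then collapsing the chain of inequalities from Corollary~\ref{puncmon}. Writing $d=2k-1$, recall that $p(Q_d)=k^2$ and that Corollary~\ref{puncmon} already gives $p(Q_d\setminus v)\le p(Q_d\setminus e)$. So it suffices to prove $p(Q_d\setminus e)\le k^2-1$ and $p(Q_d\setminus v)\ge k^2-1$; the chain $k^2-1\le p(Q_d\setminus v)\le p(Q_d\setminus e)\le k^2-1$ then forces equality throughout.

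For the upper bound I would work with simple colorings (Lemma~\ref{simple}) and the shape-sequence description of Fact~\ref{shseqqd}. The key structural point is that in the shape sequence for $Q_d$ the middle parallelogram $S_k$ is the $k\times k$ parallelogram, and it is the \emph{unique} shape attaining the row maximum in the central row: with the notation of Lemma~\ref{pig} one has $X_k^k=k$, while $X_j^k\le\min\{j,d-j+1\}\le k-1$ for every $j\ne k$. I would then delete the single edge of the embedded $Q_d$ realizing the corner of $S_k$ lying in the central row, namely the relative coordinate $(k-1,0)$ (the star-$k$ edge whose left stars are all $1$ and right stars all $0$). Since this corner has edge-multiplicity $\binom{k-1}{k-1}\binom{k-1}{0}=1$, it is covered by exactly one edge, so deleting that edge removes the corresponding color class from $S_k$. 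This drops $X^k$ from $k$ to $k-1$ and leaves every other row untouched, so Lemma~\ref{pig} applied to the resulting shape sequence (which is realized by embeddings of $Q_d\setminus e$) gives $p(Q_d\setminus e)\le k^2-1$.

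For the lower bound I would exhibit the explicit simple coloring $\chi(l,r)\equiv kl+r\pmod{k^2-1}$ on the colors $\{0,\dots,k^2-2\}$ and argue that every embedded $Q_d\setminus v$ contains all of them. The organizing observation is that the star-$k$ edges fill out the $k\times k$ parallelogram $S_k$, on which the values $kl+r$ run over $0,\dots,k^2-1$; modulo $k^2-1$ every color therefore appears, with exactly one color (at the two corners $(0,0)$ and $(k-1,k-1)$) appearing twice. Deleting $v$ removes at most one star-$k$ edge, so every color except possibly the two \emph{corner colors} $c_1,c_2$ at the uniquely covered corners $(k-1,0)$ and $(0,k-1)$ automatically survives among the star-$k$ edges: non-corner color classes have multiplicity $\ge 2$, and the doubled color survives because its two corner edges are vertex-disjoint. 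It then remains to show $c_1$ and $c_2$ survive, for which I would produce, for each offending vertex $v$ (necessarily of the special form $1^{k-1}w_k0^{k-1}$ or its mirror), an explicit backup edge of the same color in a neighboring parallelogram and verify it is not incident to $v$.

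The main obstacle is precisely this last step of the lower bound. Because the coloring is simple it depends only on $(l,r)$, but the relative horizontal positions of the parallelograms in an embedding depend on how many fixed $1$'s lie between consecutive stars; concretely, if $m$ ones sit between stars $k-1$ and $k$, then the corner color of $S_k$ and the candidate backup corner of $S_{k-1}$ differ by $(k-1)m\pmod{k^2-1}$, so no single backup edge works for all embeddings. I expect the heart of the argument to be a uniform way of locating a same-color edge avoiding $v$ regardless of the fixed-bit pattern, exploiting that a color survives as soon as it occurs on two vertex-disjoint edges and that the residue class of $c_1$ meets enough parallelograms that the $2k-1$ edges through a single vertex cannot exhaust it. The remaining pieces, the monotonicity chain and the base case $k=2$ (where $Q_3\setminus v$ already satisfies $p\le 3$ by Proposition~\ref{q3-v}), are routine.
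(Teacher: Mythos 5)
Your overall architecture coincides with the paper's proof: reduce via Corollary~\ref{puncmon} to showing $p(Q_{2k-1}\setminus e)\le k^2-1$ and $p(Q_{2k-1}\setminus v)\ge k^2-1$; prove the upper bound by deleting the multiplicity-one corner edge of the middle $k\times k$ parallelogram (the paper uses the corner $(0,k-1)$, i.e.\ the edge $[0^{k-1}{*}1^{k-1}]$, you use its mirror $(k-1,0)$ --- equivalent) so that $X^k$ drops from $k$ to $k-1$ and Lemma~\ref{pig} applies; and prove the lower bound with the same coloring $\chi(e)=k\cdot l(e)+r(e)\pmod{k^2-1}$, correctly identifying the four bad vertices and the two vulnerable corner colors of $S_k$. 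But the step you yourself flag as ``the main obstacle'' is a genuine gap, and it is precisely the heart of the paper's argument: you never prove that the deleted corner color survives. Your fallback sketch --- that the $2k-1$ edges through $v$ cannot exhaust the residue class of $c_1$ across parallelograms --- is not a proof as stated: which colors a given parallelogram carries depends on its horizontal offset (as you note), so a shape other than $S_k$ need not contain the deleted color at all, and singleton corner classes can be destroyed by a single edge loss; a count of edges through $v$ does not by itself yield a surviving class of the one specific color you need.

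The paper closes this exactly where you guessed residues $\bmod\ (k-1)$ would matter, but by a sharper, offset-independent mechanism. Since $k(l+1)+(r-1)-(kl+r)=k-1$ and $(k-1)\mid(k^2-1)$, every row of the grid carries precisely the $k+1$ colors of a single residue class $\bmod\ (k-1)$, with consecutive rows in consecutive classes; the deleted corner color lies in the central row, so its residue class occurs only in rows $1$, $k$, and $2k-1$ of the embedding. Then a case split on the middle bit of $v$: if the $k$th entry of $v$ agrees with the entries before it, the single edge $v$ removes from the direction-$(k-1)$ matching lies in a class of $S_{k-1}$ of multiplicity at least $2$, so the $(k+1)\times(k-1)$ parallelogram $S_{k-1}$ remains \emph{intact} as a set of color classes; if it agrees with the entries after it, $S_{k+1}$ remains intact. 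Finally --- and this is the missing idea --- the intact parallelogram contains the \emph{entire} residue class no matter where it sits horizontally: in $S_{k-1}$ the staircase of relative coordinates $(0,0),(0,k-1),(1,k-2),\ldots,(k-2,1),(k-2,k)$ has colors forming $k+1$ consecutive multiples of $k-1\pmod{k^2-1}$, hence all $k+1$ colors congruent to the deleted one (and symmetrically in $S_{k+1}$). So rather than hunting for a single backup edge, which as you correctly observe cannot work uniformly over embeddings, one shows that a whole neighboring shape covers the whole residue class. Without this (or an equivalent) argument your lower bound, and hence the theorem, remains unproven.
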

\begin{proof}
By Corollary~\ref{puncmon} it suffices to show that $k^2-1 \le p(Q_{2k-1} \setminus v)$ and $p(Q_{2k-1} \setminus e) \le k^2-1$.

To prove the first inequality, we show that the simple coloring $\chi: E(Q_n) \rightarrow [k^2-1]$ where $\chi(e) = k \cdot l(e) + r(e) \pmod{k^2 - 1}$ is $(Q_{2k-1}\setminus v)$-polychromatic.  

The $k$th shape in the shape sequence for $Q_{2k-1}$ is a $k \times k$ parallelogram. Since each column in this parallelogram contains $k$ consecutive colors $\pmod{k^2-1}$, and only the first and last columns in the parallelogram share any colors (the color at coordinate (0,0) and the color at coordinate $(k-1,k-1)$  are the same), this shape contains all $k^2-1$ colors. Since all the edges in the color classes in this parallelogram form a matching, deleting a vertex can remove at most one edge in any of these color classes. Every color class except the ones at coordinates (0,0), $(k-1,k-1)$, $(0,k-1)$, and $(k-1,0)$ contains at least two edges, and since the colors at coordinates (0,0) and $(k-1,k-1)$  are the same, there are only four vertices which can be deleted from $Q_{2k-1}$ where in the corresponding shape sequence for $Q_{2k-1} \setminus v$ the $k$th shape will not contain all colors.  In these vertices, the first $k-1$ entries are all 0's or all 1's, and the last $k-1$ entries are all 1's or all 0's, opposite from the first $k-1$ entries.  The $k$th entry can be 1 or a 0.  For instance, if $k=3$, the vertices in question correspond to
$[00011]$, $[11100]$, $[00111]$, and $[11000]$.  

Since $k(l+1) + (r-1) - (kl+r) = k-1$, and since $(k-1)$ divides $k^2-1$, the coloring $\chi(e) = k \cdot l(e) + r(e) \pmod{k^2 - 1}$ has the property that there are exactly $k+1$ colors in each row, each congruent $\pmod{k-1}$. Further, consecutive rows have colors in consecutive congruence classes. See Figures~\ref{11100} and~\ref{11000} for examples where $k=3$.

Consider an embedding of $Q_{2k-1} \setminus v$ and assume $v$ is one of the four vertices where the $k$th shape does not contain all colors. This choice of $v$ deletes one color class from the $k$th row of the $k$th shape. In the shape sequence for $Q_{2k-1} \setminus v$ the colors in the $k$th row are also found only in the first and $(2k-1)$th row. Without loss of generality assume these colors are congruent to 0 $\pmod{k-1}$.

In the two cases where the $k$th entry of $v$ is the same as the entries before it, the $(k-1)$st shape will remain intact (a $(k+1) \times (k-1)$ parallelogram). In the two cases where the $k$th entry of $v$ is the same as the entries after it, the $(k+1)$st shape will remain intact (a $(k-1) \times (k+1)$ parallelogram). Thus it suffices to show that both of these parallelograms contain all colors congruent to 0 $\pmod{k-1}$. In the $(k-1)$st shape, a $(k+1) \times (k-1)$ parallelogram, the colors at coordinates (0,0), $(0,k-1)$, $(1,k-2)$, $\ldots$, $(k-2,1)$, $(k-2, k)$ are $(k+1)$ consecutive multiples of $k-1$ $\pmod{k^2-1}$.  In the $(k+1)$st shape, the colors at coordinates (0,0), $(k, k-2)$, $(1,k-2)$, $(2,k-3)$, $\ldots$, $(k-1,0)$ are $(k+1)$ consecutive multiples of $k-1$ $\pmod{k^2-1}$.  Hence the coloring $\chi$ is $(Q_{2k-1}\setminus v)$-polychromatic, as claimed.

We now prove the inequality $p(Q_{2k-1} \setminus e) \le k^2-1$. If the edge corresponding to the edge with a star in the $k$th position, 0's to the left and 1's to the right (e.g. for $k=3$, $e=
[00{*}11]$) is deleted, then a single color class in the $k$th row of the $k$th
shape is deleted from the shape sequence for $Q_{2k-1}$ (Figure~\ref{5me} shows the shape sequence for $k=3$).  This reduces the maximum width of the shapes in the $k$th row by one, leaving the other rows intact, and the inequality follows from Lemma~\ref{pig}.
\end{proof}

\begin{figure}
\begin{center}
\junk{
\newcommand{\inS}[3]{\fill[gray!60]  (#1+#3,-#2-1) rectangle (#1+1+#3,-#2);  }
\begin{tikzpicture}[scale=0.6]
\foreach \x/\y in {0/0, 0/1, 0/2,0/3,0/4}{\inS{\x}{\y}{1}}
\foreach \x/\y in {0/0, 0/1, 0/2,0/3,1/1, 1/2,1/3,1/4}{\inS{\x}{\y}{3}}
\foreach \x/\y in {0/0, 0/1,0/2, 1/1, 1/2,1/3, 2/3,2/4}{\inS{\x}{\y}{8}}
\foreach \x/\y in {0/0, 0/1, 1/1, 1/2,2/2,2/3, 3/4}{\inS{\x}{\y}{12}}
\foreach \x/\y in {0/0, 1/1, 2/2,3/3,4/4}{\inS{\x}{\y}{16}}
\draw (0,0) grid (21,-5);
\end{tikzpicture}
\vskip 1em
}
\newcommand{\inS}[3]{\fill[gray!40]  (#1+#3,-#2-1) rectangle (#1+1+#3,-#2);  }
\begin{tikzpicture}[scale=0.6]
\foreach \x/\y in {0/0, 0/1, 0/2,0/3,0/4}{\inS{\x}{\y}{1}}
\foreach \x/\y in {0/0, 0/1, 0/2,0/3,1/1, 1/2,1/3,1/4}{\inS{\x}{\y}{3}}
\foreach \x/\y in {0/0, 0/1,0/2, 1/1, 1/2,1/3, 2/3,2/4}{\inS{\x}{\y}{8}}
\foreach \x/\y in {0/0, 0/1, 1/1, 1/2, 2/2,2/3, 3/4}{\inS{\x}{\y}{12}}
\foreach \x/\y in {0/0, 1/1, 2/2,3/3,4/4}{\inS{\x}{\y}{16}}
\draw (0,0) grid (21,-5);
  \foreach \i in {0,...,20}{
  \foreach \j in {0,...,4} {
  \pgfmathtruncatemacro{\x}{mod(\j+2*\i,8)} 
  \draw (\i+0.5,-\j-0.5) node{\x};  
}}  
\end{tikzpicture}
\end{center}

\caption{A shape sequence for $Q_5\setminus v$, where $v$ corresponds to $[11100]$, with the coloring $\chi(e) = 3 \cdot l(e) + r(e) \pmod{8}$.}\label{11100}
\end{figure}

\begin{figure}
\begin{center}
\junk{
\newcommand{\inS}[3]{\fill[gray!60]  (#1+#3,-#2-1) rectangle (#1+1+#3,-#2);  }
\begin{tikzpicture}[scale=0.6]
\foreach \x/\y in {0/0, 0/1, 0/2,0/3,0/4}{\inS{\x}{\y}{1}}
\foreach \x/\y in {0/0, 0/1, 0/2,0/3, 1/2,1/3,1/4}{\inS{\x}{\y}{3}}
\foreach \x/\y in {0/0, 0/1,0/2, 1/1, 1/2,1/3, 2/3,2/4}{\inS{\x}{\y}{8}}
\foreach \x/\y in {0/0, 0/1, 1/1, 1/2,2/2,2/3, 3/3,3/4}{\inS{\x}{\y}{12}}
\foreach \x/\y in {0/0, 1/1, 2/2,3/3,4/4}{\inS{\x}{\y}{16}}
\draw (0,0) grid (21,-5);
\end{tikzpicture}
\vskip 1em
}
\newcommand{\inS}[3]{\fill[gray!40]  (#1+#3,-#2-1) rectangle (#1+1+#3,-#2);  }
\begin{tikzpicture}[scale=0.6]
\foreach \x/\y in {0/0, 0/1, 0/2,0/3,0/4}{\inS{\x}{\y}{1}}
\foreach \x/\y in {0/0, 0/1, 0/2,0/3, 1/2,1/3,1/4}{\inS{\x}{\y}{3}}
\foreach \x/\y in {0/0, 0/1,0/2, 1/1, 1/2,1/3, 2/3,2/4}{\inS{\x}{\y}{8}}
\foreach \x/\y in {0/0, 0/1, 1/1, 1/2, 2/2,2/3, 3/3,3/4}{\inS{\x}{\y}{12}}
\foreach \x/\y in {0/0, 1/1, 2/2,3/3,4/4}{\inS{\x}{\y}{16}}
\draw (0,0) grid (21,-5);
  \foreach \i in {0,...,20}{
  \foreach \j in {0,...,4} {
  \pgfmathtruncatemacro{\x}{mod(\j+2*\i,8)} 
  \draw (\i+0.5,-\j-0.5) node{\x};  
}}  
\end{tikzpicture}
\end{center}
\caption{A shape sequence for $Q_5\setminus v$, where $v$ corresponds to $[11000]$, with the coloring $\chi(e) = 3 \cdot l(e) + r(e) \pmod{8}$.}\label{11000}
\end{figure}


\begin{figure}
\begin{center}
\newcommand{\inS}[3]{\fill[gray!60]  (#1+#3,-#2-1) rectangle (#1+1+#3,-#2);  }
\begin{tikzpicture}[scale=0.6]
\foreach \x/\y in {0/0, 0/1, 0/2,0/3,0/4}{\inS{\x}{\y}{1}}
\foreach \x/\y in {0/0, 0/1, 0/2,0/3,1/1, 1/2,1/3,1/4}{\inS{\x}{\y}{3}}
\foreach \x/\y in {0/0, 0/1,0/2, 1/1, 1/2,1/3, 2/3,2/4}{\inS{\x}{\y}{8}}
\foreach \x/\y in {0/0, 0/1, 1/1, 1/2,2/2,2/3,3/3, 3/4}{\inS{\x}{\y}{12}}
\foreach \x/\y in {0/0, 1/1, 2/2,3/3,4/4}{\inS{\x}{\y}{16}}
\draw (0,0) grid (21,-5);
\end{tikzpicture}
\end{center}
\caption{A shape sequence for $Q_5\setminus e$, where $e$ corresponds to $[11{*}00]$.}\label{5me}
\end{figure}

We now turn our attention to even punctured cubes, first giving exact values for $p(Q_4 \setminus v)$, $p(Q_4 \setminus e)$, and $p(Q_6 \setminus e)$.

\begin{theorem}\label{p4mv}
$p(Q_4 \setminus v)  = 5$.
\end{theorem}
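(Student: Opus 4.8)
The upper bound is already in hand: Corollary~\ref{puncmon} gives $p(Q_4\setminus v)\le p(Q_4)=6$, and Lemma~\ref{ub-v} sharpens this to $p(Q_4\setminus v)\le p(Q_4)-1=5$. So the entire content of the theorem is the matching lower bound $p(Q_4\setminus v)\ge 5$, and by Lemma~\ref{simple} it suffices to exhibit a single simple $5$-coloring that is $(Q_4\setminus v)$-polychromatic on every hypercube.

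The plan is to use the coloring $\chi(e)=2\,l(e)+r(e)\pmod 5$ and to verify polychromaticity through the shape-sequence picture of Fact~\ref{shseqqd}. Recall that an embedding of $Q_4$ contributes four parallelograms $S_1,\dots,S_4$ (of sizes $4\times1$, $3\times2$, $2\times3$, $1\times4$), where $S_i$ consists of the edges flipping the $i$th star; the color class of $S_i$ arising from $s$ ones among the left stars and $t$ ones among the right stars contains exactly $\binom{i-1}{s}\binom{4-i}{t}$ edges, hence it is a single edge precisely at the four corners of the parallelogram and contains at least two edges otherwise. I would first record the two facts that drive everything: (i) deleting a vertex $v$ destroys exactly one edge in each of the four directions, hence removes at most one edge from any single color class, so a color survives whenever it occurs on a color class of size $\ge 2$ inside one parallelogram; and (ii) since $\chi$ is simple, translating the embedding merely shifts the colors inside $S_i$ by a constant $\delta_i$, so the relevant data per direction is the count $N_i(u)=\sum_{2s+t\equiv u}\binom{i-1}{s}\binom{4-i}{t}$ of edges of a given color inside $S_i$.

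With these in place the proof reduces to showing that no single vertex deletion can wipe out a color. A color $c$ can be eliminated only if, in \emph{every} direction $i$, all color-$c$ edges of $S_i$ coincide with the one edge of $S_i$ meeting $v$; this forces $N_i(c-\delta_i)\le1$ in each direction. Directions $2$ and $3$ are the obstruction: one computes $N_2(u)\ge1$ and $N_3(u)\ge1$ for \emph{all} $u$, so both must be in the ``lonely corner'' case. The corners of $S_2$ carrying a color uniquely sit at star-count positions $(s,t)\in\{(0,0),(1,2)\}$, and those of $S_3$ at $(s,t)\in\{(0,1),(2,0)\}$ (note $(0,0)$ and $(2,1)$ of $S_3$ share a color, so $N_3=2$ there). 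Writing $v=b_1b_2b_3b_4$, its direction-$2$ edge sits at $(b_1,\,b_3+b_4)$ and its direction-$3$ edge at $(b_1+b_2,\,b_4)$; I would then check that each of the four pairings imposes contradictory bits (e.g. $(0,0)$ in direction $2$ forces $b_3=b_4=0$, while $(0,1)$ in direction $3$ forces $b_4=1$). Hence no vertex realizes both, every color survives every deletion, and the lower bound follows.

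The main obstacle is exactly this compatibility check. It is tempting to argue simply that every color lies on some parallelogram class of size $\ge 2$, but that is false: with $\chi=2l+r$ the residue carried by the two opposite corners of $S_3$ occurs (in the base placement) only on single-edge classes, so the naive argument breaks and one is forced into the finer direction-$2$-versus-direction-$3$ incompatibility above (equivalently, a short case analysis over the sixteen deleted vertices, trimmed by symmetry). Arranging the arithmetic of $\chi$ so that these lonely corners are \emph{geometrically} incompatible, rather than merely counting them, is the delicate point: a poorly chosen multiplier or modulus would admit a vertex that kills a color, which is precisely why $p(Q_4\setminus v)$ falls short of $6$.
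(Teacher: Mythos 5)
Your proof is correct and takes essentially the same approach as the paper: the upper bound via Lemma~\ref{ub-v}, and for the lower bound a simple linear coloring modulo $5$ whose second and third parallelograms each contain all five colors, with a color loss forced onto a uniquely-colored single-edge corner of $S_2$ or $S_3$ and the two corner conditions shown incompatible for any one deleted vertex. Indeed your coloring $\chi(e)=2\,l(e)+r(e) \pmod 5$ is the paper's $3\,l(e)+r(e) \pmod 5$ up to reversing the coordinate order and rescaling colors by the unit $2$, and your four-pairing contradiction check is just a reorganization of the paper's enumeration of the four exceptional vertices $[0000]$, $[1111]$, $[0011]$, $[1100]$.
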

\begin{proof}
Since $p(Q_4) = 6$, the upper bound follows from Lemma~\ref{ub-v}.

For the lower bound, we show that the simple coloring $\chi: E(Q_n) \rightarrow [5]$ where $\chi(e) = 3 \cdot l(e) + r(e) \pmod{5}$ is $(Q_4\setminus v)$-polychromatic (see Figure~\ref{chi4mv}). To see this, note that the shape sequence for $Q_4$ contains all five colors in both the second shape and the third shape regardless of where those shapes appear. If a vertex other than one corresponding to $[0000]$, $[1111]$, $[0011]$, or $[1100]$ is deleted, then the shape sequence corresponding to $Q_4 \setminus v$ will have the second or third shape still intact, and thus be polychromatic.  If
either of the first two of these are deleted, then even though the second
shape will lose one element, it will still contain all five colors, and if
either of the last two are deleted, the same holds for the third
shape. 
\end{proof}

\begin{figure}
\begin{center}
\newcommand{\inS}[3]{\fill[gray!40,thick]  (#1+#3,-#2-1) rectangle (#1+1+#3,-#2);  }
\begin{tikzpicture}[scale=0.65]
\foreach \x/\y in {0/0, 0/1, 0/2, 0/3}{\inS{\x}{\y}{1}}
\foreach \x/\y in {0/0, 0/1, 1/1, 0/2, 1/2, 1/3}{\inS{\x}{\y}{3}}
\foreach \x/\y in {0/0, 0/1,  1/1, 1/2,  2/2, 2/3}{\inS{\x}{\y}{8}}
\foreach \x/\y in {0/0, 1/1,   2/2,  3/3}{\inS{\x}{\y}{12}}
\draw (0,0) grid (18,-4);
  \foreach \i in {0,...,17}{
  \foreach \j in {0,...,3} {
  \pgfmathtruncatemacro{\x}{mod(\j+2*\i,5)} 
  \draw (\i+0.5,-\j-0.5) node{\x};  
}}  
\end{tikzpicture}
\end{center}
\caption{A shape sequence for $Q_4$ with the coloring $\chi(e) = 3 \cdot l(e) + r(e) \pmod{5}$.}\label{chi4mv}
\end{figure}

\begin{theorem}\label{p4me}
$p(Q_{4} \setminus e) =6$.
\end{theorem}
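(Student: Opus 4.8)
The upper bound is immediate: since $Q_4\setminus e$ is a subgraph of $Q_4$, Corollary~\ref{puncmon} (equivalently Proposition~\ref{submono}) gives $p(Q_4\setminus e)\le p(Q_4)=6$. The work is the matching lower bound, for which the plan is to exhibit a simple $6$-coloring of $Q_n$ that is $(Q_4\setminus e)$-polychromatic.

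The organizing observation is a reduction. Every embedding of $Q_4\setminus e$ is obtained from an embedding of $Q_4$ by deleting a single edge $f$, and such an embedding omits a color exactly when that color sat on $f$ alone. Hence a coloring $\chi$ is $(Q_4\setminus e)$-polychromatic if and only if in \emph{every} embedding of $Q_4$ each of the six colors appears on \emph{at least two} edges. So I would look for a ``doubly polychromatic'' simple coloring. The coloring $\chi(e)=3l(e)+r(e)\pmod 6$ that proves the lower bound for $Q_4$ does not suffice: in the compact embedding $[\ast\ast\ast\ast 0\cdots 0]$ the color class $(l,r)=(1,2)$ is a singleton, and its color appears on no other edge (the four edges at the corner vertex $[1111]$ yield four singleton classes along the top row of the shape sequence). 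Moreover one cannot reduce to asking that two shapes be individually rainbow so that one survives the deletion: each shape $S_i$ spans only $8$ edges, too few to carry all six colors twice, and no simple coloring can make every $2\times 3$ block \emph{and} every $3\times 2$ block rainbow. The color-multiplicities of the four shapes must instead be made to overlap.

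The coloring I propose identifies the six colors with $\Z_2\times\Z_3$ via $\chi(e)=\bigl(l(e)\bmod 2,\ (l(e)+r(e))\bmod 3\bigr)$. To verify robustness, recall from Fact~\ref{shseqqd} that the edges of $S_i$ in a given embedding have $l=A_i+p$ and $r=B_i+q$ with $0\le p\le i-1$, $0\le q\le 4-i$, and multiplicity $\binom{i-1}{p}\binom{4-i}{q}$, where $A_i$ is the number of fixed $1$s to the left of the $i$th star and $A_i+B_i=N$ is the total number of fixed $1$s (the same constant for all four shapes). Since such an edge receives color $\bigl((A_i+p)\bmod 2,\ (N+p+q)\bmod 3\bigr)$, the multiset of colors contributed by $S_i$ depends only on $A_i\bmod 2$ and $N\bmod 3$, and not on the actual horizontal positions of the shapes. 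This is what makes the verification finite: a translation of the embedding merely relabels colors, so after normalizing $A_1\equiv 0\ (\mathrm{mod}\ 2)$ and $N\equiv 0\ (\mathrm{mod}\ 3)$ the total color-multiplicity vector is determined by the three parities $(A_2,A_3,A_4)\in\{0,1\}^3$. I would then tabulate the $8$ resulting vectors and confirm that the smallest color-total is $2$ in each. The mechanism is transparent: $S_2$ already meets every one of the six colors at least once, while $S_1$, $S_3$, and $S_4$ supply the needed second copy of each color that $S_2$ meets only once.

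The hard part is the construction, not the verification. Once the $\Z_2\times\Z_3$ coloring is in hand the check is a finite, mechanical case analysis, precisely because each shape's color profile is insensitive to position. The difficulty I expect to be central is locating a coloring that simultaneously has this position-invariance and the overlap property forcing every color to be hit twice no matter how the shapes slide past one another as the interstar gaps $c_1,c_2,c_3$ vary. The bulk of the written proof should therefore be the short computation of the four per-shape profiles together with the table of eight multiplicity vectors certifying that every color is always at least doubly covered.
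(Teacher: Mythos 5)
Your proposal is correct and takes essentially the same route as the paper: the upper bound via Corollary~\ref{puncmon}, the reduction to showing every color appears on at least two edges of every embedded $Q_4$, and a verification that each color lands in two distinct shapes of the shape sequence. Moreover, your coloring $\bigl(l(e)\bmod 2,\ (l(e)+r(e))\bmod 3\bigr)$ is, by the Chinese remainder theorem, just $l(e)+4r(e)\pmod 6$, the left--right mirror image of the paper's coloring $4l(e)+r(e)\pmod 6$, so even the construction coincides up to the reversal symmetry of the hypercube.
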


\begin{proof}
Since $p(Q_4)=6$, the upper bound follows from Corollary~\ref{puncmon}.

For the lower bound, we show that the simple coloring $\chi: E(Q_n) \rightarrow [6]$ where $\chi(e) = 4 \cdot l(e) + r(e) \pmod{6}$ is $(Q_4\setminus e)$-polychromatic (see Figure~\ref{chi4me}).  Since deleting an edge affects at most one color class, it suffices to show that each color is present in two shapes of the shape sequence for $Q_4$. For this coloring, each row contains only two colors, which are congruent $\pmod{3}$, and adjacent rows contain consecutive congruence classes. In the four rows containing a shape sequence for $Q_4$, the two colors in the first and fourth rows are in the first and third shapes, and the colors in the two middle rows are in the second and third shapes.
\end{proof}

\begin{figure}
\begin{center}
\newcommand{\inS}[3]{\fill[gray!40,thick]  (#1+#3,-#2-1) rectangle (#1+1+#3,-#2);  }
\begin{tikzpicture}[scale=0.65]
\foreach \x/\y in {0/0, 0/1, 0/2, 0/3}{\inS{\x}{\y}{1}}
\foreach \x/\y in {0/0, 0/1, 1/1, 0/2, 1/2, 1/3}{\inS{\x}{\y}{3}}
\foreach \x/\y in {0/0, 0/1,  1/1, 1/2,  2/2, 2/3}{\inS{\x}{\y}{8}}
\foreach \x/\y in {0/0, 1/1,   2/2,  3/3}{\inS{\x}{\y}{12}}
\draw (0,0) grid (18,-4);
  \foreach \i in {0,...,17}{
  \foreach \j in {0,...,3} {
  \pgfmathtruncatemacro{\x}{mod(\j+3*\i,6)} 
  \draw (\i+0.5,-\j-0.5) node{\x};  
}}  
\end{tikzpicture}
\end{center}
\caption{A shape sequence for $Q_4$ with the coloring $\chi(e) = 4 \cdot l(e) + r(e) \pmod{6}$.}\label{chi4me}
\end{figure}

\begin{theorem}\label{p6me}
$p(Q_{6} \setminus e) =12$.
\end{theorem}

\begin{proof}
Since $p(Q_6)=12$, the upper bound follows from Corollary~\ref{puncmon}.

For the lower bound, we show that the simple coloring $\chi: E(Q_n) \rightarrow [12]$ where $\chi(e) = 5 \cdot l(e) + r(e) \pmod{12}$ is $(Q_6\setminus e)$-polychromatic (see Figure~\ref{chi6me}).  Since deleting an edge affects at most one color class, it suffices to show that each color is present in two shapes of the shape sequence for $Q_6$. For this coloring, each row contains only three colors, which are congruent $\pmod{4}$, and adjacent rows contain consecutive congruence classes. In the six rows containing a shape sequence for $Q_6$, the six colors in the first, second, fifth and sixth rows are in the second and fifth shapes, while the other six colors in the two middle rows are in the third and fourth shapes.
\end{proof}

\begin{figure}
\begin{center}
\newcommand{\inS}[3]{\fill[gray!40,thick]  (#1+#3,-#2-1) rectangle (#1+1+#3,-#2);  }
\begin{tikzpicture}[scale=0.65]
\foreach \x/\y in {0/0, 0/1, 0/2, 0/3, 0/4, 1/1, 1/2, 1/3, 1/4, 1/5}{\inS{\x}{\y}{1}}
\foreach \x/\y in {0/0, 0/1, 1/1, 0/2, 0/3, 1/2, 1/3, 1/4, 2/2, 2/3, 2/4, 2/5}{\inS{\x}{\y}{4}}
\foreach \x/\y in {0/0, 0/1, 0/2, 1/1, 1/2, 1/3, 2/2, 2/3, 2/4, 3/3, 3/4, 3/5}{\inS{\x}{\y}{8}}
\foreach \x/\y in {0/0, 0/1, 1/1, 1/2,  2/2, 2/3,  3/3, 3/4, 4/4, 4/5}{\inS{\x}{\y}{12}}
\draw (0,0) grid (18,-6);
  \foreach \i in {0,...,17}{
  \foreach \j in {0,...,5} {
  \pgfmathtruncatemacro{\x}{mod(\j+4*\i,12)} 
  \draw (\i+0.5,-\j-0.5) node{\x};  
}}  
\end{tikzpicture}
\end{center}
\caption{The second, third, fourth, and fifth shapes in the shape sequence for $Q_6$ with the coloring $\chi(e) = 5 \cdot l(e) + r(e) \pmod{12}$.}\label{chi6me}
\end{figure}

\begin{theorem}\label{pq2kmv}
For all $k \ge 3$, $p(Q_{2k} \setminus v) \ge k^2+k-2 = (k-1)(k+2)$.
\end{theorem}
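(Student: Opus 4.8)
The plan is to prove this lower bound the way all lower bounds are proved in this paper: exhibit one explicit simple coloring on $k^2+k-2$ colors and verify that every embedding of $Q_{2k}\setminus v$ meets all of them. I would use the linear coloring $\chi(e)=(k+1)\,l(e)+r(e)\pmod{k^2+k-2}$, the natural analogue of the colorings used for $Q_{2k}$ and for the odd punctured cubes. By Fact~\ref{shseqqd} the $k$th shape $S_k$ in the shape sequence of any embedding of $Q_{2k}$ is a $(k+1)\times k$ parallelogram. With this coloring each of its $k$ columns carries $k+1$ consecutive colors $\pmod{k^2+k-2}$, and the column-starts are spaced exactly $k+1$ apart, so the columns run through $k(k+1)=(k^2+k-2)+2$ consecutive values and hence cover every color, \emph{regardless of where the parallelogram sits} (a translation merely shifts all colors by a constant). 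Thus $S_k$ alone is polychromatic for the full cube $Q_{2k}$.

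The next step is to understand how deleting a vertex can damage $S_k$. All edges of a fixed shape use the same flip bit, so they form a matching; deleting a vertex removes at most one edge from $S_k$, and so removes a color from $S_k$ only when the deleted edge is the unique edge of its color class and that class is the unique class of its color inside $S_k$ (a ``lonely corner''). I would check that only the four corner classes of $S_k$ are single-edge classes (the class at $(\alpha,\beta)$ holds $\binom{k-1}{\alpha}\binom{k}{\beta}$ edges), that two of them, at $(0,0)$ and $(k-1,k)$, carry colors which the wrap-around repeats elsewhere in $S_k$ and so are safe, and that the other two, at $(0,k)$ and $(k-1,0)$, carry colors $c_1$ and $c_2$ occurring nowhere else in $S_k$. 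The vertices whose deletion removes the $(0,k)$ edge are exactly those with $0$'s in star-coordinates $1,\dots,k-1$ and $1$'s in star-coordinates $k+1,\dots,2k$, i.e.\ $v=[0^{k-1}*1^{k}]$; symmetrically the $(k-1,0)$ edge is removed by $v=[1^{k-1}*0^{k}]$. For every other deleted vertex $S_k$ still contains all colors, and we are done.

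It remains to recover $c_1$ (and, by the left--right symmetry, $c_2$) for these special vertices, by locating the lost color, still present, in another shape of the same embedding. Writing $s_j$ for the base color of $S_j$ and $g_i$ for the number of fixed $1$'s between consecutive stars, one has $s_{j+1}=s_j+k\,g_j$, so $c_1=s_k+k$ sits at an offset that is a multiple of $k$ relative to \emph{every} shape's base. For $v=[0^{k-1}*1^{k}]$ the deleted edge lies in the top row of each shape to the right of $S_k$, and for all of $S_{k+2},\dots,S_{2k}$ (and, symmetrically, for $S_1,\dots,S_{k-2}$) it lands at a color class of multiplicity at least two, so these shapes lose no color at all; a short direct check disposes of the neighbours $S_{k\pm1}$. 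So it suffices to show that $c_1$ appears in one of these intact shapes at a class that survives the deletion. \textbf{The main obstacle is exactly this last step}: since the relative horizontal positions of the shapes (equivalently the gaps $g_i$) are arbitrary, one must rule out a configuration in which $c_1$ is simultaneously absent from, or deleted in, every shape other than the damaged $S_k$. I would attack it by tracking the multiple-of-$k$ offsets of $c_1$ across the $2k$ shapes and using that each shape fails to contain only $k-2$ colors, so that a counting argument over the many available shapes forces $c_1$ to occur, surviving, in at least one; making this uniform in $k$ and in the gap pattern is the delicate part of the proof.
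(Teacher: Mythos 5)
Your first half is sound, and up to the left--right mirror symmetry of the cube it is the same as the paper's opening: the paper uses $\chi(e)=k\,l(e)+r(e) \pmod{(k-1)(k+2)}$, takes as its ``full'' shape the $(k+1)$st shape (a $k\times(k+1)$ parallelogram whose $k(k+1)$ classes run through $k^2+k$ consecutive values, hence all colors), observes that only the four corner classes are single-edge classes, that two of them are protected because their colors repeat at interior multi-edge classes of the same shape, and isolates exactly your four dangerous vertices. The genuine gap is precisely where you flag it, and it is not merely a missing computation: it is caused by your choice of multiplier. With $\chi(e)=(k+1)l(e)+r(e)$, a horizontal step within a grid row changes the color by $k$, and $k \nmid k^2+k-2$, so the colors in a fixed row do not form a residue class and you have no position-independent handle on the lost color. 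Worse, under your coloring the companion shape $S_{k+1}$ misses $k-2$ colors at \emph{every} horizontal position, and the relative shift between $S_k$ and $S_{k+1}$ moves in steps of $k$ modulo $k^2+k-2$ as the gap between the $k$th and $(k+1)$st stars varies; for odd $k$ these steps generate all residues, so the adversary can always slide $S_{k+1}$ so that $c_1$ is among its missed colors. Your proposed counting over $S_1,\dots,S_{2k}$ then has to lean on far-away shapes, each of which has fewer than $k^2+k-2$ classes and whose horizontal positions are also adversarial; nothing in the proposal rules out the adversary arranging for $c_1$ to be absent from all of them simultaneously.

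The paper's multiplier is chosen exactly to make the recovery step gap-independent. With multiplier $k$, a row step changes the color by $k-1$, which divides the modulus $(k-1)(k+2)$, so each row of the grid carries precisely the $k+2$ colors of one residue class mod $k-1$, and the residue class of the deleted color is determined by its \emph{row} --- which all shapes of the sequence share. The recovery then happens entirely inside the single companion shape (the $k$th shape, a $(k+1)\times k$ parallelogram, in the paper's orientation): the paper exhibits explicit within-shape offsets, e.g.\ $(0,0),(0,k-1),(1,k-2),\dots,(k-2,1),(k-2,k),(k-1,k-1)$, whose colors are $k+2$ consecutive multiples of $k-1$ shifted by the base color, i.e.\ the \emph{entire} residue class of the lost color, and checks that none of these classes can be emptied by any of the four dangerous vertices (each is either a multi-edge class or a corner whose edge pattern is incompatible with those vertices). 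Since everything is measured in within-shape offsets and shared rows, the gaps $g_i$ never enter, and no counting over the other $2k-2$ shapes is needed. The repair for your writeup is therefore to replace $(k+1)l(e)+r(e)$ by $k\,l(e)+r(e)$ (or its mirror $l(e)+k\,r(e)$, matching your choice of $S_k$ as the full shape) and to run the single-shape residue-class recovery instead of chasing $c_1$ across the whole sequence.
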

\begin{proof}
We show that the simple coloring $\chi: E(Q_n) \rightarrow [(k-1)(k+2)]$ where $\chi(e) = k \cdot l(e) + r(e) \pmod{(k-1)(k+2)}$ is $(Q_4\setminus v)$-polychromatic. In this coloring, each row only contains $k+2$ colors, which are all congruent $\pmod{k-1}$. Adjacent rows contain colors in consecutive congruence classes.

First note that a $k \times (k+1)$ parallelogram, the $(k+1)$st shape in the shape sequence for $Q_{2k}$, contains all colors.   All color classes in this parallelogram except for the ones at coordinates $(0,0)$, $(k,0)$, $(0,k-1)$, and $(k,k-1)$ have at least two parallel edges, and thus deleting a vertex from $Q_{2k}$ will leave at least one edge in each of these color classes. Further, the edges in the classes at coordinates $(0,0)$ and $(k,k-1)$ have the same color.  Thus the only four vertices whose deletion will cause this shape to lose a color are those with first $k$ coordinates all 0 or all 1, and last $k-1$ coordinates all 1 or all 0, opposite from the first.  For one of these vertices, if the first $k$ coordinates are all 1, the color class at coordinate $(k,0)$ is deleted.  If the first $k$ coordinates are all 0, the color class at coordinate $(0,k-1)$ is deleted. Without loss of generality, assume that the color classes at coordinates $(0,k-1)$ and $(k,0)$ are in rows containing colors congruent to 0 and 1 $\pmod{k-1}$ respectively. See Figure~\ref{11110000} for an example with $k=4$ and $v=[11110000]$.

In either of these cases, the $k$th shape in the shape sequence for $Q_{2k}\setminus v$ must contain the deleted color.  In the $k$th shape, the color classes at coordinates $(0,0)$, $(0,k-1)$, $(1,k-2)$, $\ldots$, $(k-2,1)$, $(k-2,k)$, and $(k-1,k-1)$ contain $k+2$ consecutive multiples of $k-1$ $\pmod{(k-1)(k+2)}$ and thus contain all colors congruent to 0 $\pmod{k-1}$.  Similarly, the color classes at coordinates $(0,1)$, $(1,0)$, $(1,k-1)$, $(2,k-2)$, $\ldots$, $(k-1,1)$, and $(k-1,k)$, contain $k+2$ consecutive numbers $\pmod{(k-1)(k+2)}$ that are congruent to 1 $\pmod{k-1}$ and thus contain all colors congruent to 1 $\pmod{k-1}$. None of these color classes can be deleted by deleting any of the four vertices under consideration.
\end{proof}

\begin{figure}
\begin{center}
\newcommand{\inS}[3]{\fill[gray!40]  (#1+#3,-#2-1) rectangle (#1+1+#3,-#2);  }
\begin{tikzpicture}[scale=0.6]
\foreach \x/\y in {0/0, 0/1, 0/2,0/3,0/4,1/1,1/2,1/3,1/4,1/5,2/2,2/3,2/4,2/5,2/6,3/4,3/5,3/6,3/7}{\inS{\x}{\y}{3}}
\foreach \x/\y in {0/0,0/1,0/2,0/3, 1/1,1/2,1/3,1/4,2/2,2/3,2/4,2/5,3/3,3/4,3/5,3/6,4/5,4/6,4/7}{\inS{\x}{\y}{12}}

\draw (0,0) grid (21,-8);
  \foreach \i in {0,...,20}{
  \foreach \j in {0,...,7} {
  \pgfmathtruncatemacro{\x}{mod(\j+3*\i,18)} 
  \draw (\i+0.5,-\j-0.5) node{\x};  
}}  
\end{tikzpicture}
\end{center}

\caption{The fourth and fifth shapes in the shape sequence for $Q_8\setminus v$, where $v$ corresponds to $[11110000]$, with the coloring $\chi(e) = 4 \cdot l(e) + r(e) \pmod{18}$.}\label{11110000}
\end{figure}

\begin{corollary}\label{evensum}
For all $k\ge 3$, 
\[k^2+k-2 \le p(Q_{2k} \setminus v) \le k^2+k -1.\]
For all $k \ge 4$,
\[k^2+k-2 \le p(Q_{2k} \setminus e) \le k^2+k.\]
\end{corollary}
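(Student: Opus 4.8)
The plan is to obtain the corollary by assembling bounds that have already been established, since each of the four inequalities is an immediate consequence of a previously proved result, together with the values $p(Q_{2k}) = k^2+k$ and $p(Q_{2k-1}) = k^2$ supplied by Theorem~\ref{pd}. No new construction or shape-sequence analysis is needed here; the substantive work has already been carried out in Theorem~\ref{pq2kmv} and Lemma~\ref{ub-v}, and the corollary simply records their consequences in the two displayed chains.

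First I would handle the two lower bounds. The inequality $k^2+k-2 \le p(Q_{2k}\setminus v)$ is precisely the statement of Theorem~\ref{pq2kmv}, whose explicit simple coloring $\chi(e) = k\cdot l(e)+r(e) \pmod{(k-1)(k+2)}$ realizes $(k-1)(k+2)=k^2+k-2$ colors. For the edge version, I would invoke the monotonicity chain of Corollary~\ref{puncmon}, namely $p(Q_{2k}\setminus v)\le p(Q_{2k}\setminus e)$; combining this with Theorem~\ref{pq2kmv} immediately gives $k^2+k-2\le p(Q_{2k}\setminus e)$.

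Next I would handle the two upper bounds. For the vertex case, Lemma~\ref{ub-v} gives $p(Q_{2k}\setminus v)\le p(Q_{2k})-1$, and since $p(Q_{2k})=k^2+k$ by Theorem~\ref{pd}, this yields $p(Q_{2k}\setminus v)\le k^2+k-1$. For the edge case, the last inequality of Corollary~\ref{puncmon} gives $p(Q_{2k}\setminus e)\le p(Q_{2k}) = k^2+k$. Collecting the four inequalities produces exactly the two asserted chains for all $k\ge 3$, the range on which each cited result applies.

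Since every step is a direct citation, there is no genuine obstacle in the proof of the corollary itself; the only things to verify are the arithmetic identities $(k-1)(k+2)=k^2+k-2$ and $p(Q_{2k})=k(k+1)=k^2+k$, both routine. The real difficulty — which this corollary deliberately does \emph{not} resolve — is closing the remaining gaps: for $2k\ge 6$ we lack matching constructions and upper bounds, so whether $p(Q_{2k}\setminus v)$ equals $k^2+k-2$ or $k^2+k-1$, and the exact value of $p(Q_{2k}\setminus e)$, both remain open.
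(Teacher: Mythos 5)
Your proposal is correct and matches the paper's (implicit) argument exactly: the corollary is stated without a separate proof precisely because it assembles Theorem~\ref{pq2kmv} for the lower bounds, Lemma~\ref{ub-v} together with $p(Q_{2k})=k^2+k$ from Theorem~\ref{pd} for the vertex upper bound, and the monotonicity chain of Corollary~\ref{puncmon} for the edge bounds. Nothing is missing, and your closing remark about the remaining gaps for $2k \ge 6$ agrees with the paper's own discussion.
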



\subsection{Open Problems}

We conclude with some directions for future research on polychromatic edge colorings.

\begin{problem}\label{otherG}
For which other graphs $G$ can we determine $p(G)$?  
\end{problem}

A first step might be to determine the polychromatic numbers for even punctured cubes, starting with $p(Q_6 \setminus v)$ and $p(Q_8 \setminus e)$. If these polychromatic numbers are smaller than the upper bounds given in Corollary~\ref{evensum}, it would provide our first example of a polychromatic number not equal to the bound given by Lemma~\ref{pig}.  In turn, such a result might give insight into how Lemma~\ref{pig} might be strengthened--for example, its proof only uses horizontal translations, but shape sequences can be translated in any direction--or whether other ideas are necessary.

\begin{problem}\label{all}
 For each $r \ge 2$, is there some $G$ such that $p(G) = r$?  
\end{problem}

The smallest open case for Problem~\ref{all} is $r=7$.  It would be interesting even to show that the gaps between polychromatic numbers are bounded.  It might also be interesting to investigate whether (and under what circumstances) polychromatic numbers can ``jump.'' In other words, if an edge is deleted from a graph $G$, by how much can the polychromatic number of the resulting graph differ from $p(G)$?

We remark that in this paper, we follow the definition of embedding given in Alon, Krech, and Szab\'o  \cite{AKS07}, where a subgraph $H$ of the hypercube is considered an embedding of $G$ if there is a graph isomorphism from $H$ to $G$. For the purpose of this discussion we refer to this type of embedding as a \textit{subgraph embedding}. When considering Problems~\ref{otherG} and \ref{all}, it may also be interesting to consider a more restrictive definition of embedding that we call \textit{isometric embedding}. In this definition, we first choose a fixed embedding of $G$ in a hypercube $Q_m$. Then the embeddings of $G$ in some larger hypercube $Q_n$ are exactly the images of $G$ when $Q_m$ is embedded in $Q_n$. 

For all subgraphs of the hypercube considered in this paper (hypercubes and punctured cubes of dimension 3 or greater) the two types of embeddings yield the same set of subgraphs.  However in general this is not true.  For example, any path with three edges is considered a subgraph embedding of  $Q_2 \setminus e$, while only those paths whose endpoints are at distance one are considered isometric embeddings of $Q_2 \setminus e$. Considering only isometric embeddings rather than subgraph embeddings would change the polychromatic number for many graphs.  For example, using subgraph embeddings, $p(Q_2 \setminus e ) = 1$, whereas with isometric embeddings, where every embedding of $Q_2 \setminus e$ must contain 3 edges of a 4-cycle, $p(Q_2 \setminus e) = 2$ (just color the edges according to their level$\pmod{2}$).

We do not know the answer to Problem~\ref{all} in the case of subgraph embeddings, but in the case of isometric embeddings the answer is yes:  Fix $r \ge 1$, and let $G$ be the graph consisting of all edges that use the first star in a copy of $Q_r$ (i.e. $G$ is a perfect matching in $Q_r$ with all edges parallel).  Then every isometric embedding of $G$ contains edges on $r$ consecutive levels, so by coloring the edges according to their level$\pmod{r}$ we obtain a $G$-polychromatic $r$-coloring, and $p(G) \ge r$.  However the embedding of $G$ in $[{*}\ldots{*}00\ldots 0]$ where all edges use the first star covers only $r$ color classes in a simple coloring (the classes $(0,0), (0,1), \ldots, (0,r-1)$) and so $p(G) \le r$. 

Finally, Bialostocki \cite{Bia83} proved that any subgraph of the hypercube not containing $Q_2$ as a subgraph and intersecting every $Q_2$ has at most $(n+\sqrt{n})2^{n-2}$ edges. This implies for large $n$, every $Q_2$-polychromatic 2-coloring has approximately half the edges of each color. Is it possible to generalize Bialostocki's theorem? 
\begin{problem}
For large $n$, given a $G$-polychromatic coloring of $Q_n$ with $p(G)$ colors, is it true that the proportion of edges in each color class must approach $1/p(G)$?
\end{problem}

\section{Coloring Subcubes of Higher Dimension}\label{generalSection}
Alon, Krech, and Szab\'o  \cite{AKS07} suggested a generalization of the problem of finding polychromatic numbers on the hypercube where instead of edges (which themselves can be thought of as one-dimensional hypercubes), subcubes of a fixed dimension of the hypercube are colored. Let $p^i(G)$ be the polychromatic number of the graph $G$ if $Q_i$'s are colored, i.e. $p^i(G)$ is the largest number of colors with which it is possible to color the $Q_i$'s in any hypercube so that every embedding of $G$ contains a $Q_i$ of every color. To determine these polychromatic numbers, many of the ideas from edge colorings can be directly generalized.  However in Theorems~\ref{p233} and \ref{p24} we show that not everything about edge coloring generalizes in a straightforward way.

When $Q_i$'s are colored, a simple coloring is one where the color of a $Q_i$ is determined by the vector of length $(i+1)$ where the first coordinate is the number of 1's to the left of the first star, the $(i+1)$st coordinate is the number of 1's to the right of the $i$th star, and for $1<j<i+1$, the $j$th coordinate is the number of 1's between the $(j-1)$st and $j$th stars. With this definition, a proof almost identical to that of Lemma~\ref{simple} (see \"Ozkahya and Stanton \cite{OS11}) gives the following generalization.

\begin{lemma}\label{simplepi}
Let $k \ge i \ge 1$ and $G$ be a subgraph of $Q_k$. If $p^i(G) = r$, then there is a simple $G$-polychromatic $r$-coloring of the $Q_i$'s in $Q_k$.
\end{lemma}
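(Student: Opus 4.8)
The plan is to follow the proof of Lemma~\ref{simple} essentially verbatim, making three substitutions throughout: edges are replaced by $i$-dimensional subcubes; the count $k2^{k-1}$ of edges of $Q_k$ is replaced by the number $\binom{k}{i}2^{k-i}$ of copies of $Q_i$ in $Q_k$ (choose $i$ of the $k$ coordinates to be stars, then fix the remaining $k-i$ to $0$ or $1$); and the pair $(l(e),r(e))$ that governs a simple edge-coloring is replaced by the length-$(i+1)$ vector described just before the statement, which records the number of $1$'s lying in each of the $i+1$ gaps determined by the $i$ stars of a copy of $Q_i$. For $i=1$ these reduce exactly to the quantities in Lemma~\ref{simple}, which is the sanity check that the generalization is the right one.

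Concretely, I would fix $k$, suppose $n$ is large, and let $\chi$ be a $G$-polychromatic $r$-coloring of the $Q_i$'s of $Q_n$. Applying Ramsey's theorem for $k$-uniform hypergraphs with $r^{\binom{k}{i}2^{k-i}}$ colors, I fix an arbitrary ordering of the copies of $Q_i$ inside a $Q_k$, let $cube(S)$ denote the subcube whose star coordinates are the positions of a $k$-set $S$ (all other coordinates $0$), and color each $k$-subset $S$ of $[n]$ by the vector whose entries are the $\chi$-values of the copies of $Q_i$ in $cube(S)$, listed in the fixed order. For $n$ large enough Ramsey's theorem produces a set $T\subseteq[n]$ of $k^2+k-1$ coordinates on which every $k$-subset receives the same color-vector; I then take $S$ to consist of the coordinates in positions $k,2k,\ldots,k^2$ of $T$, leaving at least $k-1$ coordinates of $T$ in each of the $k+1$ regions determined by $S$.

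It remains to verify that the induced coloring of the $Q_i$'s in $cube(S)$ is simple. Given two copies $C_1,C_2$ of $Q_i$ in $cube(S)$ with the same length-$(i+1)$ vector, I would construct a $k$-set $S'\subseteq T$ and a copy $C_3$ of $Q_i$ in $cube(S')$ such that (i) $C_3$, regarded as a subcube of $Q_n$, is literally equal to $C_1$, and (ii) $C_3$ occupies the same position in the fixed ordering of the $Q_i$'s of $cube(S')$ as $C_2$ occupies in $cube(S)$. Property (i) gives $\chi(C_3)=\chi(C_1)$, while property (ii) together with the Ramsey color-vector equality for $S$ and $S'$ gives $\chi(C_3)=\chi(C_2)$; hence $\chi(C_1)=\chi(C_2)$, so the coloring is simple, and since $G\subseteq Q_k$ this yields the desired simple $G$-polychromatic $r$-coloring on $Q_k$.

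The crux, exactly as in Lemma~\ref{simple}, is the simultaneous realization of (i) and (ii). The abstract position of $C_2$ prescribes which of the $k$ star-slots of $S'$ remain stars and which carry $1$'s or $0$'s, and I must place the coordinates of $S'$ within $T$ so that the resulting subcube is physically $C_1$. This is possible precisely because $C_1$ and $C_2$ share the same length-$(i+1)$ vector: the interleaving pattern of stars and $1$'s is identical, so one can send the star-slots of $S'$ onto the star coordinates of $C_1$ and the $1$-slots onto the $1$-coordinates of $C_1$ in increasing order, using the $k-1$ spare coordinates of $T$ in each region to absorb the $0$-slots while keeping $s'_1<\cdots<s'_k$ and matching the concrete $1$-set of $C_1$. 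Checking that the spacing of $T$ always leaves enough room to carry out this interleaving is the one routine-but-essential point, and it is the direct analogue of the displayed $e_1,e_2,e_3$ computation in the proof of Lemma~\ref{simple}.
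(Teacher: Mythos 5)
Your proposal is correct and takes exactly the approach the paper intends: the paper gives no separate argument for Lemma~\ref{simplepi}, stating only that a proof almost identical to that of Lemma~\ref{simple} (following \"Ozkahya and Stanton \cite{OS11}) yields it. Your substitutions---$\binom{k}{i}2^{k-i}$ copies of $Q_i$ in place of the $k2^{k-1}$ edges, the length-$(i+1)$ gap vector in place of $(l(e),r(e))$, and the order-preserving interleaving of stars and $1$'s (with the $k-1$ spare coordinates of $T$ in each region absorbing the $0$-slots) in place of the displayed $e_1,e_2,e_3$ computation---are precisely the routine adaptation the paper has in mind, and your spacing check is sound since $T$ has $k^2+k-1$ elements.
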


Thus we restrict our attention to simple colorings, and consider color classes in an $(i+1)$-dimensional grid.  As with edge colorings, we refer to all $Q_i$'s with the same vector in a simple coloring as a color class. For example, the embedding $[01110{*}0{*}11{*}01001{*}11011]$ of $Q_4$ in $Q_{22}$ would be in color class $(3,0,2,2,4)$.  Define two color classes in a $(i+1)$-dimensional grid to be on the same \textit{level} if their entries have the same sum.  Define a $j_1 \times j_2 \times \cdots \times j_l$ \emph{parallelepiped} to be a set of color classes of the following form: $\{(a_1 + \alpha_1 , a_2 + \alpha_2, \ldots, a_l+\alpha_l): 0 \le \alpha_k < j_k\}$. For the remainder of the section, we restrict our attention to the case where $G=Q_d$. Shape sequences for $Q_d$ are characterized by the following generalization of Fact~\ref{shseqqd}.

\begin{fact}\label{shseqqdpi}
Let $n \ge d \ge i \ge 1$. Every shape sequence for an embedding of $Q_d$ in $Q_n$ consists of $\binom{d}{i}$ shapes where each shape is a $j_1 \times j_2 \times \cdots \times j_{i+1}$ parallelepiped where $j_1 + j_2 + \cdots + j_{i+1} = d+1$, and each shape occupies the same $d$ levels. The color classes in each shape correspond to the $Q_i$'s using the same set of $i$ stars.  Conversely, every instance of such a shape sequence where the shapes are arranged in a proper relative position corresponds to some embedding of $Q_d$ in $Q_n$.
\end{fact}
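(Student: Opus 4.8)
The plan is to generalize the bijective analysis used to establish Fact~\ref{shseqqd}, replacing the single prefix/postfix pair $(l,r)$ by the full vector of gap-sums determined by the $i$ chosen stars. First I would fix an embedding of $Q_d$ in $Q_n$, represented as an $n$-bit vector with $d$ stars. A $Q_i$ inside this embedded cube is obtained by selecting a fixed size-$i$ subset of the $d$ stars to remain stars and replacing the other $d-i$ stars by $0$/$1$ values; all other coordinates of the ambient vector are fixed constants. Thus there are exactly $\binom{d}{i}$ ways to choose which $i$ of the $d$ stars are the ``active'' stars of the $Q_i$, and I would declare one shape for each such choice. This immediately yields the claimed count of $\binom{d}{i}$ shapes.

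Next I would compute the color class of a single $Q_i$ within a fixed choice of active stars. By the definition of a simple coloring (Lemma~\ref{simplepi}), the color class is the $(i+1)$-vector whose $j$th entry counts the $1$'s lying in the $j$th gap delimited by the active stars. The constant (non-star) coordinates of the embedding contribute a fixed base amount $a_j$ to each gap, while the $d-i$ inactive stars are precisely the coordinates whose $0$/$1$ value we are free to set. Each inactive star falls into exactly one of the $i+1$ gaps; if gap $j$ contains $m_j$ inactive stars, then as those bits range over all $0$/$1$ assignments the $j$th coordinate of the color vector ranges over $\{a_j, a_j+1, \ldots, a_j+m_j\}$ independently across gaps. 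Setting $j_k = m_k + 1$, the set of attainable color classes is exactly the parallelepiped $\{(a_1+\alpha_1,\ldots,a_{i+1}+\alpha_{i+1}) : 0 \le \alpha_k < j_k\}$, and since $\sum_k m_k = d-i$ inactive stars we get $\sum_k j_k = (d-i)+(i+1) = d+1$, as claimed. To see that each shape occupies the same $d$ levels, I would observe that the level (the sum of the vector entries) equals the total number of $1$'s among the $d-i$ inactive stars plus the fixed number of ambient $1$'s; this ranges over $d-i+1$ consecutive values independent of which stars are chosen to be active, and the base offset is the same for every shape because the total number of $1$'s among all $d$ star-positions and the ambient $1$'s is shared. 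Mapping $\alpha$ to the edge/subcube it indexes gives that each shape's color class corresponds precisely to the $Q_i$'s using that fixed set of $i$ stars.

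For the converse, I would run the construction backwards: given any instance of such a shape sequence whose shapes sit in a valid relative position, I would reconstruct an ambient $n$-bit vector with $d$ stars realizing it, choosing the number of ambient $1$'s in each inter-star block so that the base points $a_j$ and the star positions match the prescribed parallelepiped locations. The main obstacle, and the reason for the hypothesis that the shapes be ``arranged in a proper relative position,'' is exactly this realizability: unlike the $i=1$ case, where any left-to-right ordering of the one-dimensional parallelograms is achievable by shuffling $1$'s, the $(i+1)$-dimensional positions of the $\binom{d}{i}$ parallelepipeds are not independent—they must be mutually consistent with a single underlying placement of $d$ stars and the $1$'s between them. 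I would therefore spend the bulk of the argument characterizing precisely which joint placements are consistent (the ``proper relative position'' condition) and verifying that every consistent placement does arise from an embedding, so that the correspondence is a genuine bijection rather than merely an injection of embeddings into shape sequences.
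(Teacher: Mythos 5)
Your proposal is correct and takes essentially the approach the paper intends: the paper states this Fact without proof, supporting it only by the worked example of Figure~\ref{q4nq22}, and your argument---fix the $i$ active stars, note the $m_k$ inactive stars in the $k$th gap range over $0/1$ independently so the color classes fill a $j_1\times\cdots\times j_{i+1}$ parallelepiped with $j_k=m_k+1$ and $\sum_k j_k=d+1$, then recover the base points in the converse from the inter-star $1$-counts $(b_0,\ldots,b_d)$---is exactly the generalization of Fact~\ref{shseqqd} that the example illustrates, with your closing observation correctly identifying that the undefined ``proper relative position'' condition is precisely joint realizability of all $\binom{d}{i}$ base points from a single vector $(b_0,\ldots,b_d)$. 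One caveat: your level computation correctly yields $d-i+1$ common levels per shape (matching the paper's example, levels $12$--$14$ for $d=4$, $i=2$), which shows the Fact's phrase ``the same $d$ levels'' is accurate only when $i=1$, so you should state $d-i+1$ outright rather than claiming to verify ``$d$ levels.''
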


For example, if $Q_2$'s are colored, the embedding $[1111{*}011{*}10110{*}101{*}001]$ of $Q_4$ in $Q_{22}$ contains 24 $Q_2$'s, partitioned into $\binom{4}{2}=6$ sets according to which two stars they use.  These six sets give rise to six shapes, all parallelepipeds containing color classes in levels 12, 13, and 14 (see Figure~\ref{q4nq22}).

\begin{figure}
\begin{tabular}{l|l|l|l}
Sets of two stars  & $Q_2$'s using those stars&Color classes&Dimensions of shape\\ \hline

3rd and 4th & $[11110011010110{*}101{*}001]$ & $(9,2,1)$ & $3\times 1 \times 1$ \\
 & $[11110011110110{*}101{*}001]$ & $(10,2,1)$&\\
 & $[11111011010110{*}101{*}001]$ & $(10,2,1)$ &  \\
 & $[11111011110110{*}101{*}001]$ & $(11,2,1)$&\\ \hline
2nd and 4th & $[11110011{*}101100101{*}001]$ & $(6,5,1)$ & $2\times 2 \times 1$ \\
 & $[11110011{*}101101101{*}001]$ & $(6,6,1)$&\\ 
 & $[11111011{*}101100101{*}001]$ & $(7,5,1)$ &  \\
 & $[11111011{*}101101101{*}001]$ & $(7,6,1)$&\\ \hline
1st and 4th & $[1111{*}0110101100101{*}001]$ & $(4,7,1)$ & $1\times 3 \times 1$ \\
 & $[1111{*}0110101101101{*}001]$ & $(4,8,1)$&\\ 
 & $[1111{*}0111101100101{*}001]$ & $(4,8,1)$ &  \\
 & $[1111{*}0111101101101{*}001]$ & $(4,9,1)$&\\ \hline
 1st and 3rd & $[1111{*}011010110{*}1010001]$ & $(4,5,3)$ & $1\times 2 \times 2$ \\
 & $[1111{*}011010110{*}1011001]$ & $(4,5,4)$&\\ 
 & $[1111{*}011110110{*}1010001]$ & $(4,6,3)$ &  \\
 & $[1111{*}011110110{*}1011001]$ & $(4,6,4)$&\\ \hline
  2nd and 3rd & $[11110011{*}10110{*}1010001]$ & $(6,3,3)$ & $2\times 1 \times 2$ \\
 & $[11110011{*}10110{*}1011001]$ & $(6,3,4)$&\\ 
 & $[11111011{*}10110{*}1010001]$ & $(7,3,3)$ &  \\
 & $[11111011{*}10110{*}1011001]$ & $(7,3,4)$&\\ \hline
  1st and 2nd & $[1111{*}011{*}1011001010001]$ & $(4,2,6)$ & $1\times 1 \times 3$ \\
 & $[1111{*}011{*}1011001011001]$ & $(4,2,7)$&\\ 
 & $[1111{*}011{*}1011011010001]$ & $(4,2,7)$ &  \\
 & $[1111{*}011{*}1011011011001]$ & $(4,2,8)$&\\ 
\end{tabular}
\caption{The six shapes for the embedding $[1111{*}011{*}10110{*}101{*}001]$ of $Q_4$ in $Q_{22}$.}\label{q4nq22}
\end{figure}
\junk{

  and use constructions to establish lower bounds on $p^i(G)$, and upper bounds by arguments like Lemma~\ref{pig}.  In both cases, we need to examine the `shapes' of the color classes in the $(i+1)$-dimensional grid of color classes covered by a copy of $G$.  Similar to the situation for edges, where the color classes covered by an embedding could be characterized by shapes corresponding to edges using the same star, in this context the color classes covered are characterized by shapes corresponding to $Q_i$'s using the same set of $i$ stars.

  For example, if $Q_2$'s are colored, the embedding $[011{*}10110{*}101{*}001]$ of $Q_3$ in $Q_{17}$ contains six $Q_2$'s, partitioned into three sets according to which stars they use.  These three sets give rise to three shapes.

\begin{tabular}{l|l|l|l}
Sets of two stars  & $Q_2$'s &Color classes&dimensions of shape\\ \hline
2nd and 3rd & $[011010110{*}101{*}001]$ & $(5,2,1)$ & $2\times 1 \times 1$ \\
 & $[011110110{*}101{*}001]$ & $(6,2,1)$&\\ \hline
 1st and 3rd & $[011{*}101100101{*}001]$ & $(2,5,1)$ & $1\times 2 \times 1$\\
& $[011{*}101100101{*}001]$ & $(2,6,1)$&\\ \hline
 1st and 2nd & $[011{*}10110{*}1010001]$ & $(2,3,3)$ & $1\times 1 \times 2$\\
& $[011{*}10110{*}1011001]$ & $(2,3,4)$&\\
\end{tabular}

Thus the color classes covered by $Q_3$ are three $1 \times 1 \times 2$ shapes in the three-dimensional grid of color classes (where each is oriented differently), where all shapes are contained in the same two levels.  Further, by rearranging the stars, we can see that any such set of shapes corresponds to some embedding of $Q_3$.
}

\"Ozkahya and Stanton \cite{OS11} proved lower bounds on $p^i(Q_d)$ precisely analogous to those in Theorem~\ref{AKS}.   Suppose the shape with the most elements in the shape sequence for $Q_d$ contains $r$ color classes.  Then there is an $r$-coloring of the $Q_i$'s where the largest shape in the shape sequence contains all $r$ colors for every copy of $Q_d$.  The number of colors of this shape is the largest product of $i+1$ natural numbers that sum to $d+1$, and is obtained by minimizing the difference between these numbers.  The exact value depends on the remainder of $d+1 \pmod{i+1}$; if the remainder is zero, the number of colors is $\left(\frac{d+1}{i+1}\right)^{i+1}$. From this, one obtains lower bounds of $p^2(Q_3) \ge 1 \cdot 1 \cdot 2 = 2$, $p^2(Q_4) \ge 1 \cdot 2 \cdot 2 = 4$, $p^2(Q_5) \ge 2 \cdot 2 \cdot 2 = 8$, $p^2(Q_6) \ge 2 \cdot 2 \cdot 3 = 12$, and so forth.

\"Ozkahya and Stanton \cite{OS11} also proved the upper bound  analogous to Theorem~\ref{AKS},  which is $p^i(Q_d) \le \binom{d+1}{i+1}$.  This is the number of color classes covered by an embedding of $Q_d$ with $d$ stars and all other entries 0. For example, if $Q_2$'s are colored, the embedding of $Q_3$ represented by $[{*}{*}{*}00000\ldots]$ has $Q_2$'s only in color classes (0,0,0), (1,0,0), (0,1,0), and (0,0,1), and so $p^2(Q_3) \le 4 = \binom{3+1}{2+1}$.

In the case of edge colorings, the polychromatic number for $Q_d$ turned out to be the lower bound in Theorem~\ref{AKS}, but this is not the case for colorings of larger subcubes; the polychromatic number may be larger than the size of the largest shape.  We show this first in the case of $p^2(Q_3)$. All shapes in the shape sequence for $Q_3$ when $Q_2$'s are colored have two elements, and the bounds corresponding to Theorem~\ref{AKS} are $2 \le p^2(Q_3) \le 4$.

\begin{theorem}\label{p233}
 $p^2(Q_3)=3$.
\end{theorem}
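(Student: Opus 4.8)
The plan is to prove the two inequalities $p^2(Q_3)\le 3$ and $p^2(Q_3)\ge 3$ separately, each sharpening by one the bounds $2\le p^2(Q_3)\le 4$ recorded above. By Lemma~\ref{simplepi} it suffices to consider simple colorings, so a coloring is an assignment of colors to the cells $(x,y,z)$ of the three-dimensional grid of color classes. By Fact~\ref{shseqqdpi} the shape sequence of an embedding of $Q_3$ consists of three two-element parallelepipeds (``dominoes''), one running in each coordinate direction, determined by the four numbers $\ell,m,m',r$ counting the $1$'s before the first star, between the first two stars, between the last two stars, and after the third star. Explicitly the dominoes are $\{(\ell+m,m',r),(\ell+m+1,m',r)\}$, $\{(\ell,m+m',r),(\ell,m+m'+1,r)\}$, and $\{(\ell,m,m'+r),(\ell,m,m'+r+1)\}$; every choice of $\ell,m,m',r\ge 0$ yields a legitimate instance, and all six cells lie in two consecutive levels.

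For the upper bound I would assume a simple polychromatic $4$-coloring $\chi$ and derive a contradiction, keeping in mind that the direct analogue of Lemma~\ref{pig} is useless here: each domino meets each of the two levels in a single cell, so the ``largest shape per level'' estimate only gives $p^2(Q_3)\le 2$, which is already known to be false. Instead I would extract pointwise constraints from cheap embeddings. The tightest come from $m=m'=0$: then the three dominoes share their lower cell, forcing the four cells $(\ell,0,r)$, $(\ell+1,0,r)$, $(\ell,1,r)$, $(\ell,0,r+1)$ to carry all four colors for every $\ell,r$. Thus the bottom layer $f(x,z):=\chi(x,0,z)$ has $f(\ell,r),f(\ell+1,r),f(\ell,r+1)$ pairwise distinct, and $\chi(\ell,1,r)$ is forced to be the remaining color. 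I would then feed in the embeddings with exactly one $1$ between a pair of consecutive stars (the cases $m=1,m'=0$ and $m=0,m'=1$), which couple the layers $y=0,1,2$, and show that the forced structure on $f$ cannot be propagated consistently upward through the layers. The main obstacle is organizing this propagation: unlike for edges, no single embedding sees only three color classes, so the contradiction must be assembled from several interacting families rather than read off from one shape.

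For the lower bound I would exhibit an explicit simple $3$-coloring of the grid and check directly that every instance of the three-domino shape sequence contains all three colors; since an instance depends only on the residues of $\ell,m,m',r$ controlling the relative offsets, this is a finite verification. The content lies in producing the coloring, and it must be genuinely three-dimensional. A short case check shows that no modular coloring $\chi(x,y,z)=\alpha x+\beta y+\gamma z\pmod 3$ is polychromatic (those with all coefficients nonzero already fail on the embedding $m=m'=1$, where two dominoes coincide, and the degenerate ones fail on other small embeddings); that no coloring independent of the middle coordinate works (the $m=m'=0$ constraints force, in each row, a fixed color to occupy every other position, which is self-contradictory); and that no coloring depending only on the parities of the coordinates works, since each color class would then have to be a transversal of all sixteen parity-instances, forcing every class to have at least three of the eight parity-cells, which is impossible. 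Hence the coloring must use all three coordinates in a non-separable way, and the heart of the proof is to write one down and confirm that it defeats the $m=m'=1$ instance and its relatives. I expect constructing and verifying this coloring to be the crux; the upper bound, while also delicate, amounts to a careful bookkeeping of the layer-to-layer propagation.
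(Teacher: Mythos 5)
Your setup (reduction to simple colorings via Lemma~\ref{simplepi}, the three-domino shape sequence from Fact~\ref{shseqqdpi}, and the correct observation that the levelwise bound of Lemma~\ref{pig} degenerates to $2$ here) matches the paper's framework, and your $m=m'=0$ constraints are genuinely correct: the four cells $(\ell,0,r)$, $(\ell+1,0,r)$, $(\ell,1,r)$, $(\ell,0,r+1)$ must receive four distinct colors, so the layer $y=1$ is forced by the layer $y=0$. But neither half of the proposal is actually a proof. For the lower bound you never produce the coloring; you only exclude three candidate families and state that writing one down is the crux. Your exclusions do point the right way: the paper's coloring, $\chi(x_1,x_2,x_3)\equiv x_1+x_2+x_3 \pmod 3$ when $x_2$ is even and $\equiv x_1+x_2+x_3+1 \pmod 3$ when $x_2$ is odd, is precisely a parity-of-the-middle-coordinate twist of the linear coloring you discarded, and its verification is the finite check over the residues of $a_2,a_3 \bmod 2$ that you predicted (Figure~\ref{p23tab}). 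Still, as written, existence is asserted rather than proved.

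For the upper bound, the entire contradiction is deferred to ``show that the forced structure on $f$ cannot be propagated consistently,'' which is the whole difficulty; nothing in the proposal carries it out. Moreover, the embedding families you plan to use, only $(m,m')\in\{(0,0),(1,0),(0,1)\}$, look insufficient: in the paper's human-checkable argument, after normalizing the colors of the classes $000,100,010,001$ and propagating through the subcubes $[{*}{*}{*}{*}0]$, $[1{*}{*}{*}{*}]$, and $[{*}{*}{*}{*}1]$ of $Q_5$, two partial colorings ($\chi_{11}$ and $\chi_{21}$) survive every constraint of the kinds you list inside $Q_5$, and the contradiction is finally delivered by the embedding $[{*}1{*}1{*}]$, an $m=m'=1$ instance, through the unassigned color class $030$. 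So at least locally your plan omits the one family of embeddings the known proof needs; to complete it you would have to either add the $m=m'=1$ instances or prove that on a sufficiently large grid the $(1,0)$ and $(0,1)$ families alone already collide---neither of which is done. As it stands, the proposal is a sensible research plan with correct preliminary reductions, but it establishes neither $p^2(Q_3)\ge 3$ nor $p^2(Q_3)\le 3$.
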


\begin{proof}
Lower bound: Consider the simple 3-coloring $\chi$ that assigns colors to any $Q_2$ in color class $(x_1, x_2, x_3)$ the color $\chi(x_1, x_2, x_3)$, where
\[ \chi(x_1, x_2, x_3)= \begin{cases}
x_1+x_2+x_3 &\pmod{3} \hspace{.3in} \text{ if } x_2 \equiv  0 \pmod{2}\\
x_1+x_2+x_3+1 &\pmod{3} \hspace{.3in} \text{ if } x_2 \equiv  1 \pmod{2}.\\
\end{cases}\] 
Consider an embedding of $Q_3$ where there are $a_1$ 1's to the left of the first star, $a_2$ 1's between the first and second stars, $a_3$ 1's  between the second and third stars, and $a_4$ 1's to the right of the third star.  We show that it contains all three colors. Without loss of generality assume $a_1+a_2 +a_3 + a_4 \equiv 0 \pmod{3}$, and $a_1 = a_4 = 0$. For $1 \le i < j \le 3$, denote by $S_{ij}$ the shape using stars $i$ and $j$. The tables in Figure~\ref{p23tab} list the color classes in each of the three shapes, and the colors in each shape for the four possibilities for $a_2$ and $a_3$ $\pmod{2}$. For each possibility, each of the three colors is in at least one of the three shapes.

\begin{figure}
\begin{center}
\begin{tabular}{l|l}
Shape & Color  Classes \\ \hline
$S_{12}$ & $(0, a_2, a_3)$, $(0, a_2, a_3+1)$\\ \hline
$S_{13}$ & $(0, a_2+ a_3,0)$, $(0, a_2+ a_3+1,0)$\\ \hline
$S_{23}$ & $(a_2, a_3,0)$, $(a_2+1, a_3,0)$\\ 
\end{tabular}
\hspace{.5in}
  \begin{tabular}{l|l|l|l|l}
 $a_2$ & $a_3$ & $S_{12}$ & $S_{13}$ & $S_{23}$ \\ \hline
0&0& 0,1&0,2&0,1\\ \hline
0&1& 0,1 & 1 & 1,2 \\ \hline
1&0& 1,2 & 1 & 0,1 \\ \hline
1&1& 1,2 & 0,2 & 1,2\\ 
\end{tabular}
\end{center}
\caption{The colors of the $Q_2$'s contained in each shape for the 4 possibilities for $a_2$ and $a_3$ $\pmod{2}$ in an embedding of $Q_3$, using the coloring $\chi$ from Theorem~\ref{p233}.}\label{p23tab}
 \end{figure}

Upper bound: A computer search found that no $Q_3$-polychromatic 4-coloring, simple or otherwise, is possible on $Q_5$. 

For a human-checkable proof, we know that if the $Q_2$'s in $Q_n$ can be 4-colored so that every $Q_3$ is polychromatic, then it can be done  with a simple coloring. We try to construct a simple coloring $\chi$ on $Q_5$, eventually showing that it is impossible.

For the sake of notational compactness, in this proof color classes are represented by strings rather than tuples.  For example we will represent that color class $(1,0,1)$ by the string 101. The embedding $[{*}{*}{*}00]$ of $Q_3$ contains only color classes 000, 100, 010, and 001, so these color classes must all be distinct, and without loss of generality we assign them the colors 1, 2, 3, and 4, respectively. 

To assign colors to the color classes 110, 101, 011, 200, 020, and 002, we examine the embeddings $[1{*}{*}{*}0]$, $[{*}1{*}{*}0]$, $[{*}{*}1{*}0]$, and $[{*}{*}{*}10]$.  A straightforward but tedious examination shows there are only five simple 4-colorings of the $Q_4$ $[{*}{*}{*}{*}0]$, falling into two patterns:  One is $\chi_1$, below.  The other four are $\chi_2$, where $\chi_2(020)$ can be chosen to be any color.

\begin{center}
\begin{tabular}{l|l|l|l|l|l|l|l|l|l|l}
Color class & 000 & 100 & 010 & 001 & 101 & 011 & 002 & 110 & 020 & 200  \\ \hline
$\chi_1$ & 1 & 2 & 3 & 4  &1 & 3 & 2 & 3 & 1 & 4 \\ \hline
$\chi_2$ & 1 & 2 & 3 & 4  &3 & 1 & 2 & 1 & {*} & 4 \\
\end{tabular}
\end{center}
  
We now attempt to extend these colorings to the $Q_4$'s $[1{*}{*}{*}{*}]$ and $[{*}{*}{*}{*}1]$. In the case of $[1{*}{*}{*}{*}]$, we already know the colors of 100, 200, 110, and 101, so the colors for the color classes for $[1{*}{*}{*}{*}]$ must fit one of the following four patterns. For $i,j \in \{1,2\}$, $\chi_{ij}$ is the coloring generated by extending the $\chi_i$ coloring with the $\chi_j$ pattern.

\begin{center}
\begin{tabular}{l|l|l|l|l|l|l|l|l|l|l}
Color class & 100 & 200 & 110 & 101 & 201 & 111 & 102 & 210 & 120 & 300  \\ \hline
$\chi_{11}$ & 2 & 4 & 3 & 1  &2 & 3 & 4 & 3 & 2 & 1 \\ \hline
$\chi_{12}$ & 2 & 4 & 3 & 1  &3 & 2 & 4 & 2 & {*} & 1 \\ \hline
$\chi_{21}$ & 2 & 4 & 1 & 3  &2 & 1 & 4 & 1 & 2 & 3 \\ \hline
$\chi_{22}$ & 2 & 4 & 1 & 3  &1 & 2 & 4 & 2 & {*} & 3 \\ 
\end{tabular}
\end{center}
In the $Q_4$ $[{*}{*}{*}{*}1]$, color classes 001, 101, 011, 002, 102, 111, and 201 are already assigned. It is impossible to extend $\chi_{12}$ and $\chi_{22}$ consistent with the patterns $\chi_1$ or $\chi_2$, while $\chi_{11}$ and $\chi_{21}$ extend uniquely as follows.

\begin{center}
\begin{tabular}{l|l|l|l|l|l|l|l|l|l|l}
Color class & 001 & 101 & 011 & 002 & 102 & 012 & 003 & 111 & 021 & 201  \\ \hline
$\chi_{11}$ & 4 & 1 & 3 & 2  &4 & 3 & 1 & 3 & 4 & 2 \\ \hline
$\chi_{21}$ & 4 & 3 & 1 & 2  &4 & 1 & 3 & 1 & 4 & 2 \\
\end{tabular}
\end{center}
Now every color class except for 030 is assigned a color.  However in either of the colorings, the embedding $[{*}1{*}1{*}]$ (containing color classes 110, 210, 020, 030, 011, and 012) has at most two colors before 030 is assigned.  Thus it cannot contain four colors regardless of the choice for the color of 030.
\end{proof}

\begin{theorem}\label{p24}
$p^2(Q_4) \ge 5$.
\end{theorem}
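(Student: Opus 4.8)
The plan is to prove the lower bound by exhibiting an explicit simple $5$-coloring of the $Q_2$'s and checking that it is $Q_4$-polychromatic. By Lemma~\ref{simplepi} it suffices to work with a simple coloring, so the color of a $Q_2$ will be a function $\chi(x_1,x_2,x_3)$ of its color class alone. The first thing I would record is the shape data: by Fact~\ref{shseqqdpi} the shape sequence of any embedding of $Q_4$ consists of the six parallelepipeds $S_{ij}$ ($1\le i<j\le 4$) indexed by the pair of stars used, with dimensions $S_{34}\colon 3\times 1\times 1$, $S_{24}\colon 2\times 2\times 1$, $S_{14}\colon 1\times 3\times 1$, $S_{13}\colon 1\times 2\times 2$, $S_{23}\colon 2\times 1\times 2$, and $S_{12}\colon 1\times 1\times 3$, all occupying the same three consecutive levels. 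The feature that makes this bound exceed the \"Ozkahya--Stanton lower bound of $4$ is that the largest shape has only $2\cdot 2\cdot 1=4$ cells, so \emph{no single shape can carry all five colors}: the coloring must instead guarantee that the five colors are spread across the six shapes for every embedding.

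Second, I would look for $\chi$ of modular type, $\chi(x_1,x_2,x_3)\equiv \alpha x_1+\beta x_2+\gamma x_3 \pmod 5$ together with a small parity correction, in the same spirit as the coloring used for Theorem~\ref{p233}. A purely linear choice makes each ``fat'' shape ($S_{24}$, $S_{13}$, $S_{23}$) rainbow only when the relevant pair of coefficients, together with its sum, is nonzero and the two coefficients differ: e.g.\ $S_{24}$ realizes the four distinct colors $\{s,s+\alpha,s+\beta,s+\alpha+\beta\}$ exactly when $\alpha,\beta,\alpha+\beta\not\equiv 0$ and $\alpha\not\equiv\beta\pmod 5$. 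Here I would note a genuine obstruction: the four nonzero residues mod $5$ split into the complementary pairs $\{1,4\}$ and $\{2,3\}$, so any three distinct nonzero coefficients must contain a pair summing to $0$; hence a \emph{linear} coloring can never make all three fat shapes simultaneously rainbow, and every embedding will have some fat shape carrying a repeated color. This is precisely why a nonlinear (parity) correction is needed, and why the thin shapes $S_{34},S_{14},S_{12}$ must be enlisted to supply whatever color a deficient fat shape omits.

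Third, I would reduce the verification to a finite check. An embedding is determined by the five gap counts $a_1,\dots,a_5$ (the numbers of $1$'s before the first star, between consecutive stars, and after the last star), and the color classes occurring in each $S_{ij}$ are the partial sums of the $a_i$ adjusted by the fill bits on the unused stars. Because $\chi$ is a linear form mod $5$ plus a parity term, the multiset of colors in each shape depends only on the residues mod $5$ of those partial sums and on their parities; translating the whole configuration by a fixed vector with even middle coordinate shifts all colors by the same constant and so preserves the property of containing all five colors. I may therefore normalize $a_1=a_5=0$, fix the total sum mod $5$, and reduce to the finitely many residue/parity classes of $(a_2,a_3,a_4)$. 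For each class I would tabulate, exactly as in Figure~\ref{p23tab}, the color classes appearing in the six shapes and read off that their union is all of $[5]$.

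The step I expect to be the main obstacle is the design of $\chi$ itself. Since each shape is individually too small to be polychromatic, and since (by the residue argument above) no linear rule can even make the fat shapes rainbow at once, the coefficients and the parity correction must be engineered so that, \emph{simultaneously} over all residue and parity classes of $(a_2,a_3,a_4)$, the color missing from any deficient fat shape is always recovered by one of the thin shapes $S_{34},S_{14},S_{12}$. Pinning down such a $\chi$ is the creative part; once a valid coloring is in hand, the remaining argument is the routine finite tabulation described above, in direct analogy with the proof of Theorem~\ref{p233}.
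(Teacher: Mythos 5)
Your strategy is exactly the paper's: restrict to simple colorings via Lemma~\ref{simplepi}, use the six shapes $S_{ij}$ from Fact~\ref{shseqqdpi} (your shape dimensions are correct), normalize $a_1=a_5=0$ with the total sum fixed modulo $5$, and verify by a finite table that the union of the colors over the six shapes is all of $[5]$ for every residue class of $(a_2,a_3,a_4)$. Your structural observations are also sound and even go slightly beyond the paper: the remark that no single shape has five cells, and the pigeonhole argument that among three pairwise distinct nonzero residues mod $5$ some pair sums to $0$ (so no purely linear $\chi$ makes $S_{24}$, $S_{13}$, $S_{23}$ simultaneously rainbow), correctly explain why a nonlinear twist is forced, at least if one insists on a rainbow fat shape.

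However, there is a genuine gap: you never exhibit the coloring, and the coloring \emph{is} the theorem. Everything after Lemma~\ref{simplepi} and Fact~\ref{shseqqdpi} in your write-up is routine bookkeeping that both you and the paper agree on; the entire content of the lower bound $p^2(Q_4)\ge 5$ is the explicit $\chi$, which you defer as ``the creative part.'' Moreover, your proposed ansatz --- a linear form mod $5$ plus a \emph{parity} (mod $2$) correction on $x_2$, with a finite check over residue-and-parity classes --- does not match what actually works, and you give no evidence it can work. The paper's coloring is $\chi(x_1,x_2,x_3)\equiv x_1+x_2+x_3+j \pmod 5$ when $x_2\equiv j \pmod 3$, $j\in\{0,1,2\}$: a mod-$3$ twist on the middle coordinate, verified over the $27$ classes of $(a_2,a_3,a_4) \pmod 3$ (Figure~\ref{p24tab}), not your $8$ parity classes. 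The mod-$3$ period is tuned to the thin shapes of length $3$ ($S_{12}$, $S_{14}$, $S_{34}$, whose three cells differ by consecutive increments of $x_2$ or of $x_1+x_3$), so that each such shape contributes three consecutive colors shifted by the twist; a mod-$2$ correction has no analogous interaction with these length-$3$ runs, and nothing in your proposal certifies that the thin shapes then recover the colors the deficient fat shapes omit. Until a concrete $\chi$ is written down and its table checked, the proof is a plan, not a proof.
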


\begin{proof}
Consider the simple 5-coloring $\chi$ that assigns to any $Q_2$ in color class $(x_1, x_2, x_3)$ the color $\chi(x_1, x_2, x_3)$, where

\[\chi(x_1,x_2,x_3) = \begin{cases}

x_1 + x_2 + x_3 &\pmod{5} \hspace{.3in} \text{ if } x_2 \equiv 0 \pmod{3}\\
x_1 + x_2 + x_3 + 1 &\pmod{5} \hspace{.3in}  \text{ if } x_2 \equiv 1 \pmod{3}\\
x_1 + x_2 + x_3 + 2 &\pmod{5} \hspace{.3in}  \text{ if } x_2 \equiv 2 \pmod{3}.\\

\end{cases} \]

Consider an embedding of $Q_4$ where there are $a_1$ 1's to the left of the first star, $a_2$ 1's between the first and second stars, $a_3$ 1's  between the second and third stars, $a_4$ 1's  between the third and fourth stars, and $a_5$ 1's to the right of the fourth star.  We show that it contains all five colors. Without loss of generality assume $a_1+a_2 +a_3 + a_4 + a_5 \equiv 0 \pmod{5}$ and $a_1 = a_5 = 0$.  For $1 \le i < j \le 4$, denote by $S_{ij}$ the shape using stars $i$ and $j$. The following table lists the color classes in each of the six shapes.

\begin{tabular}{l|l}
Shape & Color Classes \\ \hline
$S_{12}$ &  $(0, a_2, a_3+a_4)$,  $(0, a_2, a_3+a_4+1)$, $(0, a_2, a_3+a_4+2)$ \\ \hline
$S_{13}$ &  $(0, a_2+ a_3,a_4)$, $(0, a_2+ a_3+1,a_4)$, $(0, a_2+ a_3,a_4+1)$, $(0, a_2+ a_3+1,a_4+1)$ \\ \hline
$S_{14}$ &  $(0, a_2+ a_3+a_4,0)$,  $(0, a_2+ a_3+a_4+1,0)$, $(0, a_2+ a_3+a_4+2,0)$ \\ \hline
$S_{24}$ &  $(a_2,a_3+a_4,0)$, $(a_2+1, a_3+a_4,0)$, $(a_2,a_3+a_4+1,0)$, $(a_2+1, a_3+a_4+1,0)$ \\ \hline
$S_{34}$ &  $(a_2+a_3,a_4,0)$,  $(a_2+a_3+1,a_4,0)$, $(a_2+ a_3+2,a_4,0)$\\ \hline
$S_{23}$ &  $(a_2,a_3,a_4)$,   $(a_2+1, a_3,a_4)$, $(a_2,a_3,a_4+1)$, $(a_2+1, a_3,a_4+1)$ 
\end{tabular}
 
Figure~\ref{p24tab} lists the colors contained in each shape for the 27 possibilities for $a_2$, $a_3$, and $a_4$ $\pmod{3}$. For each possibility, each of the five colors is in at least one of the six shapes.
\end{proof}

\begin{figure}
\begin{center}
\begin{tabular}{l|l|l|l|l|l|l|l|l}
$a_2$ & $a_3$ & $a_4$ & $S_{12}$ & $S_{13}$ & $S_{14}$ & $S_{24}$ & $S_{34}$ & $S_{23}$ \\ \hline
0&0&0 &0,1,2&0,1,2,3&0,2,4&0,1,2,3&0,1,2&0,1,2\\ \hline
1&1&1 &1,2,3&2,1,3&0,2,4&2,1,3&1,2,3&1,2,3\\ \hline
2&2&2 &2,3,4&1,2,3,4&0,2,4&1,2,3,4&2,3,4&2,3,4\\ \hline
1&0&0 &1,2,3&1,2,3,4&1,3,2&0,1,2,3&0,1,2&0,1,2\\ \hline
0&1&0 &0,1,2&1,2,3,4&1,3,2&1,2,3,4&0,1,2&1,2,3\\ \hline
0&0&1 &0,1,2&0,1,2,3&1,3,2&1,2,3,4&1,2,3&0,1,2\\ \hline
1&1&0 &1,2,3&2,1,3&2,1,3&1,2,3,4&0,1,2&1,2,3\\ \hline
1&0&1 &1,2,3&1,2,3,4&2,1,3&1,2,3,4&1,2,3&0,1,2\\ \hline
0&1&1 &0,1,2&1,2,3,4&2,1,3&2,1,3&1,2,3&1,2,3\\ \hline
2&0&0 &2,3,4&2,1,3&2,1,3&0,1,2,3&0,1,2&0,1,2\\ \hline
0&2&0 &0,1,2&2,1,3&2,1,3&2,1,3&0,1,2&2,3,4\\ \hline
0&0&2 &0,1,2&0,1,2,3&2,1,3&2,1,3&2,3,4&0,1,2\\ \hline
2&1&0 &2,3,4&2,1,3&0,2,4&1,2,3,4&0,1,2&1,2,3\\ \hline
1&2&0 &1,2,3&0,1,2,3&0,2,4&1,2,3,4&0,1,2&2,3,4\\ \hline
2&0&1 &2,3,4&2,1,3&0,2,4&1,2,3,4&1,2,3&0,1,2\\ \hline
1&0&2 &1,2,3&1,2,3,4&0,2,4&2,1,3&2,3,4&0,1,2\\ \hline
0&2&1 &0,1,2&2,1,3&0,2,4&0,1,2,3&1,2,3&2,3,4\\ \hline
0&1&2 &0,1,2&1,2,3,4&0,2,4&0,1,2,3&2,3,4&1,2,3\\ \hline
2&1&1 &2,3,4&0,1,2,3&1,3,2&2,1,3&1,2,3&1,2,3\\ \hline
1&2&1 &1,2,3&0,1,2,3&1,3,2&0,1,2,3&1,2,3&2,3,4\\ \hline
1&1&2 &1,2,3&2,1,3&2,1,3&0,1,2,3&2,3,4&1,2,3\\ \hline
2&2&0 &2,3,4&1,2,3,4&1,3,2&2,1,3&0,1,2&2,3,4\\ \hline
2&0&2 &2,3,4&2,1,3&1,3,2&2,1,3&2,3,4&0,1,2\\ \hline
0&2&2 &0,1,2&2,1,3&1,3,2&2,1,3&2,3,4&2,3,4\\ \hline
1&2&2 &1,2,3&0,1,2,3&1,3,2&1,2,3,4&2,3,4&2,3,4\\ \hline
2&1&2 &2,3,4&0,1,2,3&1,3,2&0,1,2,3&2,3,4&1,2,3\\ \hline
2&2&1 &2,3,4&1,2,3,4&1,3,2&0,1,2,3&1,2,3&2,3,4\\ 

\end{tabular}
\end{center}
\caption{The colors of the $Q_2$'s contained in each shape for the 27 possibilities for $a_2$, $a_3$, and $a_4$ $\pmod{3}$ in an embedding of $Q_4$, using the coloring $\chi$ from Theorem~\ref{p24}.}\label{p24tab}
\end{figure}

\begin{theorem}\label{pdproj}
For all $d \ge i \ge 1$, $j \ge 1$, $p^{i+j}(Q_{d+j})\ge p^{i}(Q_d)$.
\end{theorem}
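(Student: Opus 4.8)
The plan is to reduce to simple colorings via Lemma~\ref{simplepi} and then build a $Q_{i+j}$-coloring from a $Q_i$-coloring by a projection that merges gap coordinates. First I would fix a simple $Q_d$-polychromatic coloring $\chi$ of the $Q_i$'s using $r = p^i(Q_d)$ colors; since $\chi$ is simple it is determined by the $(i+1)$-coordinate gap vector and hence defines a coloring of the $Q_i$'s of an arbitrarily large hypercube. I would then define a simple $Q_{i+j}$-coloring $\chi'$ on any $Q_N$ by
\[\chi'(c_0, c_1, \ldots, c_{i+j}) = \chi\bigl(c_0, \ldots, c_{i-1},\; c_i + c_{i+1} + \cdots + c_{i+j}\bigr),\]
i.e. by collapsing the last $j+1$ gap coordinates into a single sum. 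Intuitively this colors a $Q_{i+j}$ by the color its first $i$ stars would receive as a $Q_i$ after deleting the last $j$ stars and replacing them with $0$'s.

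The core of the argument is to show $\chi'$ is $Q_{d+j}$-polychromatic. Given an arbitrary embedding of $Q_{d+j}$ in $Q_N$ with stars $t_1 < \cdots < t_{d+j}$, I would single out the last $j$ stars $t_{d+1}, \ldots, t_{d+j}$ as \emph{special} and fix each of them to $0$. The result is a legitimate embedding of $Q_d$, so by the $Q_d$-polychromaticity of $\chi$ it contains a $Q_i$ of every color. Each such $Q_i$ uses $i$ of the first $d$ stars and fixes the remaining $d-i$ stars; I would then extend it to a $Q_{i+j}$ of the original $Q_{d+j}$ by re-introducing all $j$ special stars. This is a valid subcube, since its star set is the union of the $i$ chosen stars with the $j$ special stars, all distinct.

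The key computation is that this extension preserves the color. If $t_{p_i}$ is the rightmost chosen star, the last gap $b_i$ of the underlying $Q_i$ counts the $1$'s to the right of $t_{p_i}$ in the fixed $Q_d$ embedding; because the special stars were set to $0$, they contribute nothing to this count. Re-introducing the $j$ special stars merely subdivides the region to the right of $t_{p_i}$ into the $j+1$ trailing gaps $c_i, \ldots, c_{i+j}$ of the resulting $Q_{i+j}$, whose sum is exactly $b_i$. The first $i$ gaps are unchanged, so $\chi'(c_0,\ldots,c_{i+j}) = \chi(b_0,\ldots,b_i)$, the color of the $Q_i$. Hence every color appearing among the $Q_i$'s of the fixed $Q_d$ appears among the $Q_{i+j}$'s of the $Q_{d+j}$, proving $\chi'$ is $Q_{d+j}$-polychromatic with $r$ colors and giving $p^{i+j}(Q_{d+j}) \ge r = p^i(Q_d)$.

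I expect the main obstacle to be the bookkeeping in the last paragraph: carefully justifying that fixing the special stars to $0$ makes their contribution to the trailing gap vanish, and that reintroducing them only subdivides that trailing region without disturbing the first $i$ gaps, so that the merged coordinate of $\chi'$ matches the trailing coordinate of $\chi$. A cleaner alternative is to prove only the case $j=1$ (collapsing just the last two gaps into $c_i + c_{i+1}$) and then iterate: since $d \ge i$ is preserved under $(i,d) \mapsto (i+1,d+1)$, the general statement follows from $p^{i+j}(Q_{d+j}) = p^{(i+1)+(j-1)}(Q_{(d+1)+(j-1)}) \ge p^{i+1}(Q_{d+1}) \ge p^i(Q_d)$ by induction on $j$, reducing everything to a single-star deletion.
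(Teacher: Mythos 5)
Your proof is correct and takes essentially the same route as the paper: the paper also defines a simple projection coloring $\chi'(x_1,\ldots,x_{i+j+1}) = \chi(x_1,\ldots,x_{i+1})$, restricts a given embedding of $Q_{d+j}$ to its first $d$ stars, invokes the $Q_d$-polychromaticity of $\chi$, and lifts each color-witnessing $Q_i$ back to a $Q_{i+j}$ by re-adjoining the last $j$ stars. The only cosmetic difference is that the paper zeroes the $(d+1)$st star and every coordinate to its right so that $\chi'$ can simply forget the trailing $j$ gap coordinates, whereas you zero only the $j$ special stars and have $\chi'$ merge the trailing $j+1$ gaps by summation --- both projections preserve the color for the same reason, and your $j=1$-plus-induction remark is a valid alternative packaging of the same idea.
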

\begin{proof}
Suppose $\chi$ is a $Q_d$-polychromatic $k$-coloring of the $Q_{i}$'s in $Q_n$, where $n \ge d+j$. Consider the $k$-coloring of the $Q_{i+j}$'s in $Q_n$ given by
\[\chi'(x_1, x_2, \ldots, x_{i+1}, \ldots, x_{i+j+1}) = \chi(x_1, x_2, \ldots, x_{i+1}).\]
We show $\chi'$ is $Q_{d+j}$-polychromatic.

Let $G_{d+j}$ be an embedding of $Q_{d+j}$ in $Q_n$ represented by an $n$-bit vector with $d+j$ stars. Let $G_d$ be the embedding of $Q_d$ in $Q_n$ represented by the same vector with the $(d+1)$st star and every coordinate to the right replaced by zeros. For example, if $d=4$, $j=2$, and $G_{d+j} = [0{*}111{*}1011{*}{*}110{*}0{*}01]$, then $G_{d} = [0{*}111{*}1011{*}{*}11000000]$. 
The $i+1$ coordinates of the color classes for $G_d$ are identical to the first $i+1$ coordinates of the color classes for $G_{d+j}$ in the shapes that use the last $j$ stars. Since $\chi$ is $Q_d$-polychromatic, $G_d$ contains $Q_{i}$'s of each of the $k$ colors, and $G_{d+j}$ must also contain a $Q_{i+j}$ of each color with the coloring $\chi'$.
\end{proof}



\begin{corollary}
For all $d \ge 2$, $p^d(Q_{d+1}) \ge p^2(Q_3) = 3$.
\end{corollary}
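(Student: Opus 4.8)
The plan is to derive this immediately from Theorem~\ref{pdproj} together with the exact value $p^2(Q_3)=3$ established in Theorem~\ref{p233}; no new construction or counting is needed. The entire content will be a choice of parameters in the projection inequality, so I would present it as a short parameter-matching argument rather than anything computational.

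First I would dispose of the base case $d=2$, which is simply the assertion $p^2(Q_3)=3\ge 3$, i.e. Theorem~\ref{p233} itself, so there is nothing to prove there. For $d\ge 3$ I would invoke Theorem~\ref{pdproj}, instantiating its base dimension as $3$, its subcube dimension $i$ as $2$, and its shift $j$ as $d-2$. The hypotheses of Theorem~\ref{pdproj} (in that theorem's own notation) are $d\ge i\ge 1$ and $j\ge 1$; under this instantiation they become $3\ge 2\ge 1$ and $d-2\ge 1$, both valid precisely because $d\ge 3$. The conclusion $p^{i+j}(Q_{d+j})\ge p^i(Q_d)$ then telescopes to
\[ p^{2+(d-2)}\!\left(Q_{3+(d-2)}\right)=p^{d}(Q_{d+1})\ge p^{2}(Q_3)=3, \]
which is the claim.

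I do not expect any genuine obstacle: the only delicate point is the index bookkeeping, namely keeping the corollary's running variable $d$ distinct from the symbols $d,i,j$ in the statement of Theorem~\ref{pdproj}, and verifying that the admissibility constraint collapses to $d\ge 3$ so that the single leftover case $d=2$ is handled directly by Theorem~\ref{p233}. Once the substitution $i=2$, base dimension $3$, $j=d-2$ is made, both the upper index $i+j$ and the cube dimension $d+j$ simplify to exactly $d$ and $d+1$ respectively, and the inequality is immediate.
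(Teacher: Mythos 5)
Your proposal is correct and matches the paper's intended argument exactly: the corollary is stated as an immediate consequence of Theorem~\ref{pdproj} (with base dimension $3$, $i=2$, $j=d-2$) together with $p^2(Q_3)=3$ from Theorem~\ref{p233}, which is precisely your parameter-matching. Your explicit handling of the leftover case $d=2$ (where $j\ge 1$ fails but the claim reduces to Theorem~\ref{p233} itself) is a careful touch the paper leaves implicit.
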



It is not clear whether the colorings in Theorems~\ref{p233} and \ref{p24} have natural generalizations, or whether they are sporadic small cases.  Thus almost any progress in determining polychromatic numbers when larger subcubes are colored would be interesting. 

\begin{problem}
Improve any of the following bounds:

\begin{itemize}
\item For $d\ge 3$, $d+2 \ge p^d(Q_{d+1})\ge 3$. 
\item $10 \ge p^2(Q_4) \ge 5$.
\item For $d\ge 5$, $\binom{d+1}{3}\ge p^2(Q_d) \ge p(Q_{d-1})$.
\end{itemize}
\end{problem}

Added in proof:  Recently Chen~\cite{Chen16} has improved the lower bound on $p^2(Q_d)$ for all $d \ge 4$.

\begin{theorem}[Chen~\cite{Chen16}]\label{Chenp2}
For all $d \ge 4$,
\[ p^2 (Q_d) \ge \begin{cases} 
  (k^2+1)(k+1) & d=3k \\ 
  (k^2+k+1)(k+1) & d=3k +1\\ 
    (k^2+k+1)(k+2) & d=3k +2. 
  \end{cases} \]

\end{theorem}

\section{Acknowledgements}

This research was conducted at a workshop made possible by the Alliance for Building Faculty Diversity in the Mathematical Sciences (DMS 0946431), held at the Institute for Computational and Experimental Research in Mathematics.  The fourth author was supported by the Westminster College McCandless Research Award and would like to thank Reshma Ramadurai for helpful conversations.  All authors would like to thank Maria Axenovich for collaboration.

\end{document}